\documentclass {article}
\usepackage{authblk}
\usepackage{etex}
\usepackage[utf8]{inputenc}
\usepackage[english]{babel}
\usepackage{amsmath}
\usepackage{amsthm}
\usepackage{amssymb}
\usepackage{sectsty}
\usepackage{titlesec}
\usepackage{color}
\usepackage[color,matrix,arrow]{xy}
\usepackage{amsgen}
\usepackage{amstext}
\usepackage{amsbsy}
\usepackage{amsopn}
\usepackage{amsfonts}
\usepackage{bm}
\usepackage{eepic}
\usepackage{graphicx}
\usepackage{epsf}
\usepackage{pstricks}
\usepackage{xfrac}
\usepackage{float}
\usepackage{todonotes}
\usepackage{tikz}
\usetikzlibrary{automata, positioning}
\usepackage{enumerate}
\usepackage{eqnarray}
\usepackage{array}
\usepackage{faktor}
\usepackage{ mathrsfs }
\usepackage{enumitem}
\usepackage{caption}
\xyoption{all}
\usetikzlibrary{calc,decorations.pathreplacing,arrows.meta,positioning}

\usepackage{multicol}

\usepackage{pgf}
\usepackage{tikz-cd}
\usetikzlibrary{automata, arrows.meta, positioning,decorations.pathmorphing}

\newcommand{\footrecall}[1]{
} 
\usetikzlibrary{snakes,shapes,arrows,automata}

\usepackage{tikz}
\usetikzlibrary{arrows.meta, decorations.pathmorphing}

\titleformat*{\section}{\large\bfseries}
\titleformat*{\subsection}{\normalsize \bfseries}

\newcommand{\MT}{\mathrm{MT}}

\newcommand{\N}{\mathbb{N}}

\newcommand{\DGeo}{\mathcal{D}\text{-}\Geo}

\newcommand{\Geo}{\text{Geo}}
\newcommand{\FIM}{\text{FIM}}
\newcommand{\ConjGeo}{\text{ConjGeo}}

\newcommand{\UConj}{\mathrm{UConj}}

\theoremstyle{definition}
\newtheorem{theorem}{Theorem}[section]

\newtheorem{definition}[theorem]{Definition}

\newtheorem{proposition}[theorem]{Proposition}

\newtheorem{lemma}[theorem]{Lemma}
\newtheorem{example}[theorem]{Example}
\newtheorem{remark}[theorem]{Remark}

\newcommand{\UConjGeo}{\text{UConjGeo}}

\begin{document}

\title{Conjugacy languages in free inverse monoids}
\author[1]{André Carvalho}
\affil[1]{Centro de Investigação em Matemática e Aplicações (CIMA)
	
	Departamento de Matemática, Escola de Ciências e Tecnologia da Universidade de Évora
	
	Rua Romão Ramalho, 59, 7000–671 Évora, Portugal
	
	\texttt{andre.carvalho@uevora.pt}\\}

\author[2]{Ana-Catarina C. Monteiro}
\affil[2]{Center for Mathematics and Applications (NOVA Math), NOVA School of Science and Technology (NOVA FCT)\\
	\texttt{acatarinacm@gmail.com}}
 \maketitle
 
  \begin{abstract}
We initiate the study of conjugacy languages in free inverse monoids. Motivated by the notion of conjugacy languages in groups and the study of conjugacy in semigroups, we introduce the language of shortest representatives of conjugacy classes and study it for free inverse monoids of rank at least $2$ under the natural notion of conjugacy. We show that, contrary to the free group case, where this language is regular, in a free inverse monoid it is neither context-free nor co-context-free. In the monogenic case, this language is context-free.

We show that this non-context-freeness comes from elements with nontrivial conjugacy classes. We define an equivalence relation as follows: all elements with a nontrivial conjugacy class are related and elements with trivial conjugacy class are only related to themselves. We call this relation $\UConj$. We show that the language consisting of geodesics representing elements whose conjugacy class is trivial is context-free by providing an explicit context-free grammar generating it. 

For groups, we show that the language of minimal representatives of $\UConj$ classes is regular if the group is hyperbolic and for right-angled Artin groups with the standard generating set, it is piecewise testable. For virtually abelian groups, we show that there is a generating set for which this language is piecewise excluding and exhibit an example of a virtually abelian group admitting a generating set for which this language is not regular. 
\end{abstract}

  \section{Introduction}

Conjugacy is one of the fundamental equivalence relations in group theory.
Given a finitely generated group $G$ with generating set $X$, one can study
conjugacy from the point of view of formal language theory by considering
languages of distinguished representatives of its conjugacy classes. In
particular, a word is a \emph{geodesic} if it has minimal length among all
words representing the same element of $G$, while a conjugacy geodesic is a
geodesic whose length is minimal among all representatives of elements in its
conjugacy class. The corresponding conjugacy geodesic languages were introduced
and studied for several classes of groups in~\cite{[CHHR16]}. For example, in
a free group both the language of geodesics and the conjugacy geodesic language
are regular.

The situation outside groups is more subtle, since there are several
nonequivalent notions of conjugacy for semigroups and monoids; see, for
example,~\cite{[KM09],[AKKM17],[Kon18],[AKK19],[ABKKMM26]}. In this paper we
work with $i$-conjugacy. If $S$ is an inverse monoid and $a,b\in S$, we say
that $a$ and $b$ are \emph{$i$-conjugate}, and write $a\sim_i b$, if there
exists $g\in S^1$ such that
\[
    g^{-1}ag=b
    \qquad\text{and}\qquad
    gbg^{-1}=a.
\]
This relation is an equivalence relation and, for inverse semigroups, it
coincides with natural conjugacy~\cite[Theorem~2.6]{[Kon18]}. Throughout the
paper, when we refer simply to conjugacy, we mean this relation, and we denote
the conjugacy class of an element $a$ by $[a]$. The corresponding conjugacy
problem is decidable in free inverse monoids~\cite{[AKK19]}.

Our main object of study is the free inverse monoid $\FIM_X$ on a finite set
$X$. A central tool in the study of free inverse monoids is the representation of elements by
\emph{Munn trees}: finite labelled trees with distinguished initial and final
vertices which completely determine the represented element~\cite{[M72]}.
Moreover, $i$-conjugacy in free inverse monoids admits a geometric
characterization in terms of these distinguished vertices~\cite{[AKK19]}.
This makes Munn trees particularly suitable for studying conjugacy languages.

There is already a significant difference between free groups and free inverse
monoids at the level of ordinary geodesics. In~\cite{[CBM26]}, it is shown
that the language $\Geo(\FIM_X)$ of all geodesic words in a finitely generated
free inverse monoid is context-free and co-context-free, but it is not regular, while in free groups (in fact, in hyperbolic groups \cite{[Can84]}), we have regularity.
It is therefore natural to ask what happens when minimality is imposed not
only within the set of representatives of one element, but within an entire
conjugacy class.

Motivated by the conjugacy geodesic language for groups, we define
\[
    \ConjGeo(\FIM_X)
    =
    \left\{
        w\in\Geo(\FIM_X)
        :
        \ell(w)\leq \ell(u)
        \text{ for every }u\in\tilde X^*
        \text{ such that }[w\pi]=[u\pi]
    \right\},
\]
where $\widetilde X=X\cup X^{-1}$, $\pi\colon\tilde X^*\to\FIM_X$ is the natural epimorphism and
$\ell(w)$ denotes the length of a word $w$. Thus,
$\ConjGeo(\FIM_X)$ consists precisely of the geodesic words having minimal
length within their conjugacy class.

In the monogenic free inverse monoid, geodesic representatives of conjugate
elements have the same length, and hence
\[
    \ConjGeo(\FIM_X)=\Geo(\FIM_X).
\]
Consequently, the conjugacy geodesic language is context-free in this case.
The situation changes sharply as soon as the rank is at least two. Our first
main results show that, for $|X|\geq 2$,
\(
    \ConjGeo(\FIM_X)
\)
is neither context-free nor co-context-free. This contrasts
with the free group case, where the corresponding language is regular
\cite{[CHHR16]}. The proofs use the geometric characterization of conjugacy
in terms of Munn trees together with Ogden's Lemma.

This raises the question of where this additional language-theoretic
complexity comes from. To investigate this, we distinguish elements according
to whether their conjugacy class is trivial. We introduce an equivalence
relation $\UConj$ on $\FIM_X$ in which every element whose conjugacy class is
a singleton forms an equivalence class by itself, while all elements having a
nontrivial conjugacy class belong to one common equivalence class. In other
words, for $a,b\in\FIM_X$,
\[
    a\,\UConj\, b
\]
if either $a=b$ and $[a]=\{a\}$, or both $[a]$ and $[b]$ are nontrivial.
We then consider the corresponding language $\UConjGeo(\FIM_X)$ of geodesic
words having minimal length within their $\UConj$-class.

The distinction between trivial and nontrivial conjugacy classes is
particularly meaningful in free inverse monoids. In a group, an element has a
trivial conjugacy class if and only if it belongs to the center. By contrast,
although the center of a non-monogenic free inverse monoid is trivial, there
are many nonidentity elements whose conjugacy classes are singletons. We give
a simple characterization of these elements in terms of their Munn trees:
apart from the exceptional minimal representatives of length two arising from
the nontrivial $\UConj$-class, a geodesic belongs to
$\UConjGeo(\FIM_X)$ precisely when there is no directed edge with the same
label leaving both the initial and final vertices of its Munn tree.

Using this characterization, we construct an explicit context-free grammar
generating $\UConjGeo(\FIM_X)$.
Thus the obstruction to context-freeness of the full conjugacy geodesic
language comes from elements belonging to nontrivial conjugacy classes.

Finally, we investigate the analogous $\UConj$ language in groups. Since an
element of a group has trivial conjugacy class precisely when it is central,
$\UConjGeo(G)$ differs only by a finite set from the language of geodesic words
representing elements of $Z(G)$. This observation allows us to determine the
language-theoretic complexity of $\UConjGeo$ for several familiar classes of
groups. We prove that $\UConjGeo_X(G)$ is regular for every hyperbolic group
$G$ and every finite generating set $X$. For right-angled Artin groups with
their standard generating sets, we show that $\UConjGeo$ is piecewise
testable. For every finitely generated virtually abelian group, we construct a
finite generating set for which $\UConjGeo$ is piecewise excluding, and hence
regular, complementing the corresponding results for geodesic and conjugacy
geodesic languages in~\cite{[HHR07],[CHHR16]}. We also exhibit a virtually
abelian group with a generating set for which $\ConjGeo$ is regular while
$\UConjGeo$ is not regular. Thus, even for groups, the complexity of
$\UConjGeo$ need not be bounded above by that of $\ConjGeo$ with respect to
the same generating set.

The paper is organized as follows. In Section~\ref{preliminaries}, we recall
the necessary background on formal languages, free inverse monoids, Munn
trees, and conjugacy. In Section~\ref{conj_lang}, we introduce
$\ConjGeo(\FIM_X)$ and prove that it is neither context-free nor
co-context-free when $|X|\geq2$. In Section~\ref{sec: uconj}, we introduce
$\UConj$, characterize the elements with trivial conjugacy class, and prove
that $\UConjGeo(\FIM_X)$ is context-free. In the final section, we study
$\UConjGeo$ for hyperbolic groups, right-angled Artin groups, and virtually
abelian groups.

 \section{Preliminares}\label{preliminaries}
 In this section, we will present preliminary notions and results about formal language theory and free inverse monoids.
 \subsection{Formal languages}
 \subsubsection{Regular languages}
 An alphabet $X$ is a set and its elements are called letters. A word over $X$ is simply a finite sequence of letters of $X$.
Given an alphabet $X$, the Kleene star ($*$) on $X$ generates the set of all words over $X$, i.e.,
$$
X^* = \bigcup_{n \ge 0} X^n,
$$
where $X^0 = \{\varepsilon\}$ and $\varepsilon$ denotes the empty sequence. Together with concatenation, $X^*$ forms the free monoid generated by $X$.

A language $L \subseteq X^*$ is \textit{regular} if it can be constructed  $\emptyset$, $\{\varepsilon\}$, and $\{a\}$ (for all $a \in X$) through a finite sequence of regular operations: union ($\cup$), concatenation ($\cdot$), and Kleene star ($*$). Recall that regular languages are closed under finite unions, intersections, and complementation. Furthermore, the class of regular languages coincides precisely with the languages accepted by deterministic finite automata (DFAs) (for more details, the reader is referred to \cite{[HU79]}).

 	Also, every regular language $L$ satisfies the \textit{Pumping Lemma}, which can be stated as follows.
 	\begin{lemma}[Pumping Lemma for Regular Languages]
 		Let $L$ be a regular language. Then there exists a constant $p \geq 1$ such that every word $w \in L$ with $|w| \geq p$ can be written in the form
 		\[
 		w = xyz
 		\]
 		satisfying the following conditions:
 		\begin{enumerate}
 			\item $|xy| \leq p$;
 			\item $|y| \geq 1$;
 			\item For all $i \geq 0$, the word $x y^i z$ belongs to $L$.
 		\end{enumerate}
 \end{lemma}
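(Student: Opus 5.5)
The plan is to use the characterization of regular languages by deterministic finite automata (recalled just above) and apply the pigeonhole principle to the states visited while reading a long word. Let $L$ be regular and fix a DFA $\mathcal{A}=(Q,X,\delta,q_0,F)$ with $L(\mathcal{A})=L$. Set $p=|Q|$, which is at least $1$ since $q_0\in Q$.

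Take $w=a_1a_2\cdots a_n\in L$ with $n\ge p$. Consider the run of $\mathcal{A}$ on $w$:
\[
q_0,\ q_1=\delta(q_0,a_1),\ \dots,\ q_n=\delta(q_{n-1},a_n).
\]
Since $w\in L$, the last state $q_n$ lies in $F$. Look only at the first $p+1$ states $q_0,\dots,q_p$; this is possible because $n\ge p$. There are $p+1$ of them and only $p$ states in $Q$, so the pigeonhole principle gives indices $0\le j<k\le p$ with $q_j=q_k$.

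Now define
\[
x=a_1\cdots a_j,\qquad y=a_{j+1}\cdots a_k,\qquad z=a_{k+1}\cdots a_n .
\]
Then $|xy|=k\le p$, which is condition 1. Also $|y|=k-j\ge1$, which is condition 2.

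For condition 3, write $\delta^*$ for the extension of $\delta$ to words. By construction $\delta^*(q_0,x)=q_j$ and $\delta^*(q_j,y)=q_k=q_j$. An induction on $i$ then gives $\delta^*(q_j,y^i)=q_j$ for all $i\ge0$, with $i=0$ covering the empty word. Hence
\[
\delta^*(q_0,xy^iz)=\delta^*(q_j,z)=\delta^*(q_k,z)=q_n\in F,
\]
so $xy^iz\in L$ for every $i\ge0$.

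No step here is a real obstacle. The points that need care are:
\begin{itemize}
\item using the DFA characterization, so that each word has exactly one run;
\item restricting attention to the first $p+1$ states, which is what yields the bound $|xy|\le p$ and not merely a repetition somewhere in $w$;
\item checking the case $i=0$ separately, where the claim is just $\delta^*(q_j,z)=q_n$.
\end{itemize}
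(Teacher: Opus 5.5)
Your argument is correct: it is the standard DFA-and-pigeonhole proof, with the repeated state taken among the first $p+1$ states of the run so that $|xy|\le p$, and the case $i=0$ covered by $\delta^*(q_j,\varepsilon)=q_j$. The paper does not prove this lemma; it recalls it as standard background and points to Hopcroft--Ullman, where this same argument is the usual proof.
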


 To show that a language is regular, it suffices to write it using the allowed operations from the base elements or to build a DFA that accepts it. The pumping lemma is a most useful tool in proving that certain languages are not regular, as it provides a necessary condition to regularity.
 \subsubsection{Context-free languages}

A \emph{context-free grammar}  is a tuple $(V,A,P,S)$ where 
\begin{itemize}
\item $V$ is a finite set of \emph{variables};
\item $A$ is a set of \emph{terminal symbols} disjoint from $V$;
\item $P$ is a finite subset of  $V\times (V\cup A)^*$. An element of $R$ is a \emph{production};  
\item $S\in V$ is the \emph{starting symbol}.
\end{itemize}

A production $(X,Y)$ is often denoted by $X\to Y$ and usually, when there are two or more productions with the same left-hand side, we denote them in the same line separated by a bar. So, for example, we write $X\to Y\mid Z$ to denote the productions $X\to Y$ and $X\to Z$.

Given $U,W\in (V\cup A)^*$, we write $U\rightarrow W$ if there are $X,Y\in (V\cup A)^*$ and a production $(Z,Z')$ such that $U=XZY$ and $W=XZ'Y$. We call  $U\rightarrow W$ a \emph{derivation}.  Also, we say that $U\rightarrow^* W$ if there is some chain of derivations of the form
$$U=X_0\rightarrow X_1 \rightarrow \cdots \rightarrow X_m=W. $$

The \emph{language of a context-free grammar $G$, $L(G)$}  is simply
$$L(G)=\{w\in A^*\mid S\Rightarrow^* w\}$$ 
and a language is \emph{context-free} if it is the language of some context-free grammar.

For any given variable $K\in V$, we define $L(K)=\{w\in \Sigma^*\, |\, K\to^* w\}$. In particular, $L(G)=L(S).$

The class of context-free languages  is closed under intersection with regular languages and under finite unions, but not under intersection and complement. For further details on context-free languages see, for example, \cite{[Ber79]} or \cite{[HU79]}.

 A language is said to be \emph{co-context-free} if its complement is context-free. As a consequence of the fact that context-free languages are closed under finite unions, it follows that co-context-free languages are closed under finite intersections.
 
 Below we state two necessary conditions for a language to be context-free, which will be particularly useful in this paper for proving that given languages are not context-free.
 
 \begin{lemma}[Pumping Lemma for Context-Free Languages]
 	Let $L$ be a context-free language. Then there exists a constant $p \geq 1$ such that every word $w \in L$ with $|w| \geq p$ can be written in the form
 	\[
 	w = uvxyz
 	\]
 	satisfying the following conditions:
 	\begin{enumerate}
 		\item $|vxy| \leq p$;
 		\item $|vy| \geq 1$;
 		\item For all $i \geq 0$, the word $uv^i x y^i z$ belongs to $L$.
 	\end{enumerate}
 \end{lemma}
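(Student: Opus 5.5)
The Pumping Lemma for context-free languages is a classical result, and I will prove it through Chomsky normal form and a pigeonhole argument on parse trees. Let $L$ be context-free. If $\varepsilon\in L$, I first remove it: the statement only concerns words with $|w|\geq p$, so it suffices to treat $L\setminus\{\varepsilon\}$. By the standard normalization, $L\setminus\{\varepsilon\}$ is generated by a grammar $(V,A,P,S)$ in Chomsky normal form, in which every production is either $K\to K_1K_2$ with $K_1,K_2\in V$ or $K\to a$ with $a\in A$. I take this normalization as known from the cited references. Let $m=|V|$ and set $p=2^{m}$.

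The first step is a counting bound. For a derivation $S\to^* w$ I use its parse tree, which is binary in every internal node except the last step to a leaf. An induction on height shows that a parse tree of height $h$ (counted as the number of edges on a longest root-to-leaf path) yields a word of length at most $2^{h-1}$. Hence, if $|w|\geq p=2^m$, every parse tree of $w$ has height at least $m+1$. So some root-to-leaf path contains at least $m+1$ variable-labelled nodes.

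The second step is the pigeonhole argument, which is the only genuinely delicate point, because it must also deliver the length bound $|vxy|\leq p$. Fix a parse tree of minimal size for $w$ and a path of maximal length. Look at the last $m+1$ variable nodes on this path, the ones closest to the leaf. Two of them carry the same variable $K$: an upper node $N_1$ and a lower node $N_2$. The subtree at $N_1$ yields a word $vxy$, and the subtree at $N_2$ yields $x$, so $w=uvxyz$, with
\[
S\to^* uKz,\qquad K\to^* vKy,\qquad K\to^* x .
\]
The subtree at $N_1$ has height at most $m+1$, since the chosen path is longest and $N_1$ lies within the last $m+1$ variable nodes. The counting bound then gives $|vxy|\leq 2^{m}=p$.

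Finally, $N_1$ applies a binary production, since it is not the last variable node. Its other child therefore generates a nonempty word (no variable derives $\varepsilon$ in Chomsky normal form), and this word lies inside $v$ or $y$, so $|vy|\geq1$. Iterating $K\to^* vKy$ a total of $i$ times, or replacing the $N_1$ subtree by the $N_2$ subtree when $i=0$, gives $S\to^* uv^ixy^iz$ for all $i\geq0$. This completes the plan.
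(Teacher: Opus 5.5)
Your proof is correct: it is the standard Chomsky-normal-form argument, with the height bound $2^{h-1}$, pigeonhole on the last $m+1$ variable nodes of a longest path (which is what gives $|vxy|\le p$), and the binary production at the upper node forcing $|vy|\ge 1$. The paper gives no proof of this lemma and simply quotes it as a classical preliminary from the textbooks of Hopcroft--Ullman and Berstel, whose proof is essentially the one you give.
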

 
 \begin{lemma}[Ogden's Lemma]
 	Let $L\subseteq\tilde X^*$ be a context-free language. Then there exists a constant $p \geq 1$ such that for every word $w \in L$ with $|w| \geq p$, and for every choice of at least $p$ distinguished positions in $w$, there exists a decomposition
 	\[
 	w = uvxyz
 	\]
 	satisfying the following conditions:
 	\begin{enumerate}
 		\item The substrings $v$ and $y$ together contain at least one distinguished position;
 		\item The substring $vxy$ contains at most $p$ distinguished positions;
 		\item For all $i \geq 0$, the word $uv^i x y^i z$ belongs to $L$.
 	\end{enumerate}
 \end{lemma}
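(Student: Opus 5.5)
Ogden's Lemma is a standard fact about context-free grammars; I will prove it by examining parse trees of a grammar in Chomsky normal form. First, fix a grammar $G=(V,A,P,S)$ in Chomsky normal form generating $L\setminus\{\varepsilon\}$, so that every production is $K\to K_1K_2$ or $K\to a$. Put $k=|V|$ and $p=2^{k+1}$. Take $w\in L$ together with a set of at least $p$ marked positions. Choose a derivation tree $T$ for $w$, which is binary apart from its leaves. I call an internal node a \emph{branch point} if both of its children have marked leaves among their descendants.

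The key step is to build a path from the root downwards. At each node, step to the child whose subtree contains more marked leaves. A single count shows that whenever the path passes a branch point, the number of marked leaves below drops by at most half. Otherwise it does not drop at all. The path ends at a single marked leaf, and the root has at least $2^{k+1}$ marked leaves below it. Hence the path passes through at least $k+1$ branch points.

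Now look at the last $k+1$ branch points on the path, those nearest the leaf. By pigeonhole, two of them, say $N_1$ above $N_2$, carry the same variable $K$. Let $x$ be the yield of $N_2$, and let $vxy$ be the yield of $N_1$. Write $w=uvxyz$, so that $S\Rightarrow^* uKz$, $K\Rightarrow^* vKy$ and $K\Rightarrow^* x$. Substituting the subtree at $N_1$ in place of $N_2$ repeatedly, or replacing $N_1$'s subtree by $N_2$'s, shows that $uv^ixy^iz\in L$ for every $i\ge 0$.

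Next I verify the two counting conditions.
\begin{itemize}
\item For (1): $N_1$ is a branch point, so its child that is off the path has a marked descendant. That child's yield lies entirely inside $v$ or inside $y$, so $vy$ contains a marked position.
\item For (2): below $N_1$ on the path there are at most $k$ further branch points. Going back up from the leaf, each branch point at most doubles the marked count, while other nodes leave it unchanged. So $N_1$ has at most $2^{k+1}=p$ marked descendants.
\end{itemize}

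The only delicate point I expect is the bookkeeping in this last doubling count: the path must be chosen greedily, and the two nodes must be taken among the lowest $k+1$ branch points, not among arbitrary repetitions of a variable on the path. Finally, the case $\varepsilon\in L$ is harmless, since every word considered has $|w|\ge p\ge 1$.
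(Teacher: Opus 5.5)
Your proof is correct. It is the standard parse-tree proof of Ogden's Lemma: a greedy path through the child with more marked leaves, followed by pigeonhole on the lowest $k+1$ branch points. The paper does not prove the lemma itself; it states it as a standard tool and defers to the textbook references, where this same argument appears.

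One small wording point: in Chomsky normal form the parents of leaves are unary, not binary. This is harmless, because such nodes are never branch points and do not change the marked count.
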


Similar to the case of regular languages, these lemmas will be used to show that certain languages are not context-free.
 
 An important framework in formal language theory is the \emph{Chomsky hierarchy}, which classifies languages according to their expressive power and the types of grammars that generate them. It consists of four classes: regular, context-free, context-sensitive, and recursively enumerable languages, arranged in a hierarchy by inclusion, where each class properly contains the previous one. In particular, every regular language is context-free, and every context-free language is context-sensitive, and context-sensitive languages are recursively enumerable. We will also consider the class of co-context-free languages, does not coincide with any level of the Chomsky hierarchy.
 The diagram below illustrates the relation between these classes:
 \begin{center}
 	\begin{tikzpicture}[scale=1.2]
 		% Outer set: Recursively enumerable
 		\draw[thick] (0,1) circle (3.0);
 		\node at (0,3.25) {Recursively enumerable};
 		
 		% Context-sensitive languages
 		\draw[thick] (0,0.5) circle (2.5);
 		\node at (0,2) {Context sensitive};
 		
 		% Context-free languages
 		\draw[thick] (-0.8,0) circle (1.4);
 		\node at (-1.3,0.3) {Context};
 		\node at (-1.3,0) {free};
 		
 		% Co-context-free languages
 		\draw[thick] (0.8,0) circle (1.4);
 		\node at (1.3,0.3) {Co-context};
 		\node at (1.3,0) {free};
 		
 		% Regular languages (intersection)
 		\draw[thick, fill=gray!20] (0,0) circle (0.55);
 		\node at (0,0) {Regular};
 	\end{tikzpicture}
 	\captionof{figure}{Relations between classes of languages}
 \end{center}
 
 \subsection{Free inverse monoids}
 We now recall the basic definitions and notation concerning inverse monoids, with particular emphasis on free inverse monoids.
 
 Throughout the paper, $X$ will always denote a finite set.
 An \emph{inverse monoid} is a monoid $S$ such that for every element $s \in S$ there exists a unique element $s^{-1} \in S$ satisfying
 \[
 s s^{-1} s = s \quad \text{and} \quad s^{-1} s s^{-1} = s^{-1}.
 \]
 The element $s^{-1}$ is called the \emph{inverse} of $s$. A \emph{free inverse monoid} on a set $X$ is an inverse monoid, which we will denote by $\FIM_X$, together with a map $\iota : X \to \FIM_X$ satisfying the universal property that, for every inverse monoid $S$ and every map $\varphi : X \to S$, there exists a unique homomorphism $\overline{\varphi} : \FIM_X \to S$ such that $\overline{\varphi} \circ \iota = \varphi$.
 
 Let $\tilde{X} = X \cup X^{-1}$, where $X^{-1} = \{x^{-1} \mid x \in X\}$ is a disjoint copy of $X$. We denote by $\tilde{X}^*$ the free monoid on $\tilde{X}$. Given an inverse monoid $S$ generated by $X$, we denote by
 \(
 \pi : \tilde{X}^* \to S
 \)
 the canonical surjective homomorphism. 
 
 We will write $\ell(u)$ to denote the length of a word $u\in \tilde X$ and $|w|$ to represent the length of an element of $\FIM_X$, i.e., the length of a shortest representative of $w$. This notation will be kept throughout the paper.

 As usual, $E(S)$ represents the set of idempotents of $S$, that is,
 \[
 E(S) = \{ e \in S \mid e^2 = e \}.
 \]
 In an inverse monoid, the set $E(S)$ forms a commutative subsemigroup.

Over the years, several distinct notions of conjugacy have been introduced to generalize the standard group-theoretic definition to semigroups and monoids. Significant effort has been devoted to examining, classifying, and comparing these definitions based on their structural properties and suitability for specific classes of semigroups (see, for example, \cite{[KM09],[AKKM17],[Kon18],[AKK19],[ABKKMM26]}). Among these notions, natural conjugacy ($\sim_n$) and $i$-conjugacy ($\sim_i$) have emerged as particularly important extensions. For elements $a, b \in S$, natural conjugacy is defined by $a \sim_n b$ if and only if there exist $g, h \in S^1$ such that
$$ag = gb, \quad bh = ha, \quad hag = b, \quad \text{and} \quad gbh = a.$$
This is an equivalence relation for arbitrary semigroups. On the other hand, in inverse semigroups, we can define $i$-conjugacy by $a \sim_i b$ if and only if there exists $g \in S^1$ such that 
$$g^{-1}ag = b \quad \text{and} \quad gbg^{-1} = a.$$
Crucially, these two definitions are known to coincide in any inverse semigroup \cite[Theorem 2.6]{[Kon18]}.

In this work, we adopt the language and notation of $i$-conjugacy, as it offers a concise and intuitive formulation when operating within inverse settings. Also, we will denote by $[u]$ the conjugacy class of $u$.   We remark that it is proved in \cite{[AKK19]} that the $i$-conjugacy problem is algorithmically decidable in free inverse monoids, leveraging geometric and combinatorial representations such as Munn trees (more details in the next section).  
 
 \subsubsection{Munn trees} \label{sec_munn}
 Given a finite set $X$, the Munn tree of a word in $\tilde X^*$ representing an element on $\FIM_X$ is a birroted tree, i.e., a tree with two fixed vertices. It is constructed by first fixing an initial vertex, and then, for each successive letter of the word, adding a new directed edge labelled by that letter. A directed edge is added unless the letter is the inverse of the label of a directed edge ending at the current vertex; in that case, that edge is crossed and no new edge is added. Once all letters are read, the final vertex is also fixed. For more details see \cite{[M72]}.

Throughout this paper, unless specified otherwise, the term \textit{edge} refers to an undirected edge. Whenever direction is relevant, we shall explicitly refer to \textit{directed edges}.

 For any $u\in X^*$, the Munn tree representing $u\pi$, where $\pi$ is the natural epimorphism between $\tilde X^*$ and $\FIM_X$ will be denoted by $\MT(u)=(\Gamma(u),\alpha(u),\beta(u))$, where $\alpha(u)$ and $\beta(u)$ are the initial and final vertices, respectively. Graphically, we will mark the initial vertex with one incoming arrow and the final one with an outgoing arrow. For example, if $u=aabb^{-1}a$, the triple $(\Gamma(u),\alpha(u),\beta(u))$  is represented by:
 
 \begin{center}
 	\begin{tikzpicture}[
 		vertex/.style={circle, draw, inner sep=2pt}, % só os nós têm círculos
 		>={Stealth[length=5pt]}
 		]
 		
 		% Nodos (horizontal)
 		\node[vertex] (e) at (0,0) {};
 		\node[vertex] (a1) at (1.5,0) {};
 		\node[vertex] (a2) at (3,0) {};
 		\node[vertex] (a3) at (4.5,0) {};
 		\node[vertex] (b)  at (3,1.5) {}; % ramo vertical
 		
 		% Arestas com rótulos simples
 		\draw[->] (e) -- node[below] {$a$} (a1);
 		\draw[->] (a1) -- node[below] {$a$} (a2);
 		\draw[->] (a2) -- node[below] {$a$} (a3);
 		\draw[->] (a2) -- node[right] {$b$} (b);
 		
 		% Setas de início e\FIM
 		\draw[->, thick] (-0.6,0) -- (e);
 		\draw[->, thick] (a3) -- +(0.6,0);
 		
 	\end{tikzpicture}
 \end{center}

 In \cite{[M72]}, Munn resolved the word problem for the free inverse monoid via these tree representations by proving that two words $u, v \in \tilde{X}^*$ represent the same element in the free inverse monoid generated by $X$ if and only if their Munn trees coincide, i.e., $\mathrm{MT}(u) = \mathrm{MT}(v)$. Consequently, the elements of the free inverse monoid can be identified with, and represented by, these Munn trees.

 Throughout this paper, by a \emph{path} (or \emph{walk}) we mean any sequence of consecutive directed edges in the Munn tree, and by a \emph{simple path} we mean a path that does not repeat vertices (and hence does not repeat undirected edges). 
 
 Since we are working with trees, for any two vertices $p$ and $q$,  there is a unique simple path from $p$ to $q$. 
 The distance between $p$ and $q$ is the length of that (unique) simple path, where by length we mean the number of edges in the  path.

In \cite{[AKK19]}, a characterization of conjugation in free inverse semigroups is established in terms of Munn trees.

\begin{theorem}{\cite[Theorem 7.9]{[AKK19]}}\label{Conj_Munn}
	For any words $u,v$ in $\tilde X^*$, we have that $u\pi$ and $v\pi$ are conjugate if and only if $T(u)=T(v)$ and the label of the simple path in the tree between $\alpha(u)$ and $\alpha(v)$ is equal to the label of the simple path between $\beta(u)$ and $\beta(v)$.      
\end{theorem}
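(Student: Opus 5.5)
We would prove both directions directly from Munn's solution of the word problem, computing the Munn tree of a product $g^{-1}ag$ explicitly. The basic facts needed are standard: for words $p,q$, the Munn tree of $pq$ is obtained by gluing a copy of $\MT(q)$ onto $\MT(p)$, identifying $\alpha(q)$ with $\beta(p)$, and then folding (identifying edges with the same label and direction leaving a common vertex). Its initial vertex is $\alpha(p)$ and its final vertex is the image of $\beta(q)$. Also, $\MT(p^{-1})$ is $\MT(p)$ with the two distinguished vertices swapped.

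\emph{Sufficiency.} Suppose $T(u)=T(v)=T$ and the label $w$ of the simple path from $\alpha(u)$ to $\alpha(v)$ equals the label of the simple path from $\beta(u)$ to $\beta(v)$. Take $g=w\pi$. Reading $w^{-1}uw$, we first travel along the reduced path labelled $w^{-1}$ from $\alpha(v)$ to $\alpha(u)$. We then trace $u$, which covers $T$ and ends at $\beta(u)$. Finally we follow $w$ from $\beta(u)$ to $\beta(v)$, which stays inside $T$. All edges read lie in $T$, and all of $T$ is covered, so the tree is $T$, with initial vertex $\alpha(v)$ and final vertex $\beta(v)$. By Munn's theorem, $g^{-1}(u\pi)g=v\pi$. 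Symmetrically, $w v w^{-1}$ starts at $\alpha(u)$, travels to $\alpha(v)$, covers $T$, and ends at $\beta(u)$, so $g(v\pi)g^{-1}=u\pi$.

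\emph{Necessity.} Suppose $g^{-1}ag=b$ and $gbg^{-1}=a$ with $a=u\pi$, $b=v\pi$, and let $\omega$ be a word representing $g$.
\begin{enumerate}
\item The tree of $b=g^{-1}ag$ contains a copy of $T(a)$. Likewise the tree of $a=gbg^{-1}$ contains a copy of $T(b)$. Since these trees are finite, $T(a)$ and $T(b)$ are isomorphic as labelled trees with the same number of edges.
\item The tree of $\omega^{-1}u\omega$ is therefore exactly $T(u)$, with $T(\omega)$ glued on twice: once at $\alpha(u)$ (read backwards) and once at $\beta(u)$.
\item For this tree not to exceed $T(u)$, both copies of $T(\omega)$ must fold entirely into $T(u)$. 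Hence $\omega$ can be read in $T(u)$ starting from $\alpha(u)$, ending at some vertex $\alpha'$, and also starting from $\beta(u)$, ending at some vertex $\beta'$.
\item The product $\omega^{-1}u\omega$ has initial vertex $\alpha'$ and final vertex $\beta'$. Thus $(T(v),\alpha(v),\beta(v))=(T(u),\alpha',\beta')$ via this identification, which is the sense in which $T(u)=T(v)$.
\item The reduced form $r$ of the label of $\omega$ is the label of the simple path from $\alpha(u)$ to $\alpha'$, since in a deterministic tree the reduced label of a walk determines its endpoint. The same $r$ is the label of the simple path from $\beta(u)$ to $\beta'$. This gives the condition on paths.
\end{enumerate}

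\emph{Main obstacle.} The delicate step is making ``the tree of $g^{-1}ag$ equals $T(b)$ and contains $T(a)$'' rigorous. We need an embedding of labelled trees, not just an edge count, and this requires that folding is a homomorphism of labelled graphs injective on $T(a)$. Here we would use determinism: Munn trees are deterministic, inverse-closed labelled trees. So the map sending $T(a)$, attached at $\alpha'$, into the folded tree is an injective morphism. Finiteness plus the two-sided containment then forces isomorphism. We also need to check that the word $T(u)=T(v)$ in the statement is interpreted as equality up to this identification.
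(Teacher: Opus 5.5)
The paper does not prove this statement. It is quoted from \cite[Theorem~7.9]{[AKK19]} and used as a black box, so there is no internal argument to compare yours with. Judged on its own, your proof is correct.

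Sufficiency is exactly the right check with $g=w\pi$. In the necessity direction, the two containments force equal edge counts. Hence both glued copies of $T(\omega)$ lie inside $T(u)$, and the new distinguished vertices are the endpoints $\alpha',\beta'$ of $\omega$ read from $\alpha(u)$ and from $\beta(u)$. Step~5 holds because in a deterministic tree a cancelling pair $cc^{-1}$ in a label is precisely a backtrack, so a walk with freely reduced label is the simple path.

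The obstacle you single out essentially disappears in the usual model of Munn trees as finite subtrees of the Cayley graph of the free group $F_X$. There the tree of $pq$ is literally $T(p)\cup\overline{p}\,T(q)$, with $\overline{p}$ the free reduction of $p$. No folding is needed, and \emph{contains a copy} simply means \emph{contains a left translate}. Two-sided containment of translates of finite edge sets then gives directly that $T(v)$ is a translate of $T(u)$.

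The same model settles how to read $T(u)=T(v)$: the identification is unique. A label-preserving automorphism of a finite subtree is left multiplication by some $h\in F_X$ that preserves a finite vertex set, and this forces $h=1$. Uniqueness is what makes the path-label condition well defined, and it guarantees that the condition agrees with the identification produced in your step~4.

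Two minor points. The word \emph{therefore} in step~2 is misplaced: the description of the tree of $\omega^{-1}u\omega$ holds unconditionally, and only step~3 uses step~1. Also, the relation $gbg^{-1}=a$ enters your argument only through the edge count.
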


\begin{remark}\label{remark_D_vs_Conj}
In an inverse semigroup $S$, Green's $\mathcal{L}$- and $\mathcal{R}$-relations are defined by $a \mathcal{L} b$ if and only if $a^{-1}a = b^{-1}b$, and $a \mathcal{R} b$ if and only if $aa^{-1} = bb^{-1}$, while  Green's $\mathcal{D}$-relation is given by their composition $\mathcal{D} = \mathcal{L} \circ \mathcal{R} = \mathcal{R} \circ \mathcal{L}$. For more details on the Green's relations see, for example, \cite{[H76]}. By \cite{[M72]}, for any words $u,v \in \tilde{X}^*$, $u\pi$ and $v\pi$ are $\mathcal{D}$-related if and only if $T(u) = T(v)$. Consequently, Theorem~\ref{Conj_Munn} implies that if two elements are conjugate, then they are necessarily $\mathcal{D}$-related.
\end{remark}

Throughout the paper, for the sake of greater fluency in the exposition, we will identify words with their corresponding elements in the free inverse monoid (and so with their Munn trees). In particular, we will refer to a word representing an idempotent simply as an \emph{(geodesic) idempotent word} and for any geodesic words $u$ and $v$, we will say $u$ and $v$ are conjugate instead of $u\pi$ and $v\pi$, where $\pi$ is the natural onto morphism between $X^*$ and $\FIM_X$.  Moreover, for any $g\in\FIM_X$, by the Munn tree representing $g$ we mean the Munn tree representing any word $u\in \tilde X^*$ such that $u\pi=g$.

It is well known that any geodesic word $u$ in a free inverse monoid can be decomposed in the form
\[
u = e_1 u_1 e_2 \cdots e_n u_n e_{n+1},
\]
where $u_1 u_2 \cdots u_n$ is a freely reduced word, $e_i \in E(\FIM_X)\setminus\{1\}$ for $i \in \{2, \ldots, n\}$, and $e_1, e_{n+1} \in E(\FIM_X)$. In fact, this decomposition is unique (see~\cite{[PS05],[CBM26]}).

For future reference, we now formalize several concepts related to Munn trees introduced in {\cite{[CBM26]}}.
\begin{definition}\label{def_main/idempotent}
	Let $u = e_1 u_1 e_2 u_2 \cdots e_n u_n e_{n+1}$ be a geodesic in a free inverse monoid with generating set $X$, and consider its associated Munn tree, where $u_1 u_2 \cdots u_n$ is a freely reduced word, $e_i \in E(\FIM_X)\setminus \{1\}$ for $i \in \{2, \ldots, n\}$, and $e_1, e_{n+1} \in E(\FIM_X)$. Regarding $\MT(u)$, we define:
	\begin{enumerate}
		\item Main path - The unique simple path between the initial and final vertices.
		\item Idempotent edge - Any edge that does not belong to the main path 
		\item Idempotent branch - Every idempotent edge belongs to an idempotent branch and only idempotent edges may belong to an idempotent branch. An idempotent branch may contain several idempotent branches. A new idempotent branch starts in the following situations:
		\begin{enumerate}
			\item First, when an idempotent edge with label $a$ has one of its vertices, say $v$, on the main path; in this case, the branch consists of the entire subtree containing the vertices (and corresponding edges) whose simple path to $v$ crosses that edge labeled $a$.
			Such an idempotent branch (that may contain other idempotent branches) is said to be a \textit{main idempotent branch}.
			\item Second, in a main idempotent branch, every vertex $v$ of degree $g>2$ gives rise to $g-1$ new idempotent branches,
			each starting at one of the edges adjacent to that vertex, except for the edge, say $a$, that belongs to the simple path between $v$ and the main path, i.e., between $v$ and the closest vertex of the main path.
			
			Each such branch starting at an edge of label $b$ with initial vertex $v$ contains all vertices (and corresponding edges) whose simple path to $v$ crosses the edge $b$.
		\end{enumerate}
			\end{enumerate}
			
		\end{definition}
			
		For example, the Munn tree presented below, whose main path is represented by the blue edges, has two main idempotent branches, shown in red and green. Each of these branches contains new idempotent branches. In particular, the vertices $v_1$ and $v_2$ each give rise to two additional branches, while the vertex $v_3$ gives rise to three additional branches.
		
		\begin{center}
			\begin{tikzpicture}[
				vertex/.style={circle, draw, inner sep=1.2pt, fill=white},
				every node/.style={font=\small}
				]
				
				% ----------- Vertices -----------
				\node[vertex] (a) at (-2,0) {};
				\node[vertex] (b) at (-1,0) {};
				\node[vertex] (c) at (0,0) {};
				\node[vertex] (d) at (1,0) {};
				\node[vertex,label=above:$v_1$] (f) at (-1,1) {};
				\node[vertex] (g) at (-1.5,1) {};
				\node[vertex] (h) at (-1.5,1.5) {};
				\node[vertex,label=right:$v_2$] (i) at (-0.5,1) {};
				\node[vertex] (j) at (-0.5,1.5) {};
				\node[vertex] (k) at (-0.5,0.5) {};
				\node[vertex] (l) at (1,-0.5) {};
				\node[vertex,label=above right:$v_3$] (m) at (1,-1) {};
				\node[vertex] (n) at (1,-1.5) {};
				\node[vertex] (o) at (0,-1) {};
				\node[vertex] (p) at (2,-1) {};

				%---------- Arestas -----------------
				\draw[blue] (a) -- (b);
				\draw[blue] (b) -- (c);
				\draw[blue] (c) -- (d);
				\draw[red] (b) -- (f);
				\draw[red] (f) -- (g);
				\draw[red] (g) -- (h);
				\draw[red] (f) -- (i);
				\draw[red] (i) -- (j);
				\draw[red] (i) -- (k);
				\draw[green] (d) -- (l);
				\draw[green] (l) -- (m);
				\draw[green] (m) -- (n);
				\draw[green] (m) -- (o);
				\draw[green] (m) -- (p);
				
				\draw[->] ($(a)+(0,0.35)$) -- (a);
				\draw[->] (d) -- ($(d)+(0,0.35)$);
				
			\end{tikzpicture}
		\end{center}

We now present a useful lemma with combines two results in \cite{[CBM26]}.
\begin{lemma}{\cite[Lemma 2.5 + Condition G10]{[CBM26]}}\label{passar3x}
	 A word over $\widetilde X^*$ labels a  geodesic of $\FIM_X$ if and only if it does not cross any edge of its Munn tree more than twice.
 \end{lemma}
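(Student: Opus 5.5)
We prove the two directions separately. Throughout, a word $w\in\tilde X^*$ traces a walk in $\MT(w)$ from $\alpha(w)$ to $\beta(w)$. This walk traverses every edge of $\Gamma(w)$ at least once: edges are only created by being traversed, and no edge is ever deleted. Let $E$ be the number of edges of $\Gamma(w)$ and $M$ the number of edges on the main path.

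We first show that a word crossing some edge at least three times is not geodesic. Let $e$ be such an edge. Removing $e$ splits the tree into two components. If $e$ lies on the main path, the walk must cross $e$ an odd number of times, hence at least $3$. If $e$ is an idempotent edge, the walk must cross it an even number of times, hence at least $4$. In either case the crossings of $e$ contain a consecutive pair of the form: we enter a component $C$ of $\Gamma(w)\setminus e$, later leave $C$ through $e$, and later enter $C$ through $e$ again. This gives a factorization
\[
w = p\,x\,q\,x^{-1}\,r\,x\,s ,
\]
where the displayed $x$ and $x^{-1}$ are the crossings of $e$. The excursions $q$ and $r$ are closed walks based at the endpoint of $e$. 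We delete one of these redundant round trips by merging the two excursions into a single one: we replace the factor $x q x^{-1} r x$ by $x\, q\, r'$, where $r'$ is a walk that visits the portion of the tree covered by $r$ from the same base vertex and ends where needed. A cleaner way to organize this is through the counting argument below, so we use that instead.

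For the counting step, consider any walk from $\alpha$ to $\beta$ that covers every edge of $\Gamma(w)$. Each main-path edge is crossed an odd number of times, so at least once. Each idempotent edge is crossed an even number of times, so at least twice. Hence every word representing $w\pi$ has length at least $M+2(E-M)=2E-M$.

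This bound is attained. Perform a depth-first walk along the main path which, at each vertex, exhausts every idempotent subtree hanging there before moving on. Such a walk crosses each main-path edge exactly once and each idempotent edge exactly twice, and it produces exactly $\MT(w)$. Therefore $|w\pi|=2E-M$.

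We can now conclude both directions.
\begin{itemize}
\item If $w$ crosses no edge more than twice, then parity forces each main-path edge to be crossed exactly once and each idempotent edge exactly twice. So $\ell(w)=2E-M$ and $w$ is geodesic.
\item If $w$ crosses some edge at least three times, then parity forces that edge to be crossed exactly $3$ or at least $4$ times, exceeding its minimum by at least $2$. So $\ell(w)>2E-M$ and $w$ is not geodesic.
\end{itemize}

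The main care needed is in justifying the parity claim. It follows from the fact that a walk on a tree crosses a separating edge an odd or even number of times according to whether its endpoints lie on different sides or the same side of that edge. We also need the fact that the Munn tree of a word is exactly the set of edges its walk traverses.
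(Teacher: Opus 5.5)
Your argument is correct. The paper gives no proof of its own here: it imports the statement from \cite{[CBM26]} (Lemma~2.5 combined with Condition~G10), and separately records as Remark~\ref{rem_vezes} that a geodesic crosses main-path edges once and idempotent edges twice.

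Your route is therefore genuinely different, because it derives everything from scratch. The ingredients are:
\begin{itemize}
\item by Munn's theorem, every representative of $w\pi$ traces a walk from $\alpha$ to $\beta$ covering exactly the same tree;
\item a walk in a tree crosses an edge an odd number of times precisely when that edge separates its endpoints;
\item a depth-first walk attains the resulting lower bound.
\end{itemize}
Together these give the explicit formula $|w\pi|=2E-M$. Both directions then follow from parity, since an edge crossed at least three times exceeds its minimum crossing count by at least two.

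What your approach buys is self-containment and the length formula itself. That formula yields Remark~\ref{rem_vezes} and also the monotonicity used in Remark~\ref{remark_conjgeo}: for a fixed tree, a longer main path gives a shorter geodesic. The citation buys only brevity and alignment with \cite{[CBM26]}, from which the paper also takes Theorem~\ref{grammar_geo}.

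Two points on the write-up:
\begin{itemize}
\item Delete the excursion-merging argument in your first paragraph. It is never completed: the walk $r'$ is not specified and ``ends where needed'' is not justified. The counting argument makes it unnecessary.
\item In the lower bound, explicitly invoke Munn's theorem (equal elements have identical birooted trees). Also state that each letter accounts for exactly one edge crossing, so $\ell(v)$ is the total crossing count for every representative $v$.
\end{itemize}
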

 
\begin{remark}\label{rem_vezes}
	It is proved in \cite{[CBM26]} that the path determined by a geodesic in its Munn tree crosses each edge of the main path exactly once, and each edge of the idempotent branches exactly twice.
\end{remark}

\begin{remark}\label{idemp_decomp}
	Note that for a Munn tree representing a geodesic idempotent $e$, all edges belong to a main idempotent branch, and there are as many main idempotent branches as there are edges leaving the initial vertex (which is also the final vertex). Furthermore, each of these main idempotent branches itself represents an idempotent and determines a decomposition $e = f_1 \cdots f_n$, where $n$ is the number of main idempotent branches of the Munn tree and $f_i$ is a geodesic idempotent component associated with the $i$-th main idempotent branch.
\end{remark}

In  \cite{[CBM26]}, the  authors show that the language of all geodesic words in a free inverse monoid is context-free by providing an explicit grammar generating that language. We recall that result as the grammar presented in Section \ref{sec: uconj}.

\begin{theorem}{\cite[Theorem 6.1]{[CBM26]}}\label{grammar_geo}
Let $\FIM_X$ be the free inverse monoid generated by $X$. Then $\Geo(\FIM_X)$ is a context-free language and the following grammar with starting symbol $S$ generates $\Geo(\FIM_X)$.
    \begin{equation*}
	\begin{array}{lrlll}
		& S &\to &  E_{\tilde X\setminus\{x\}}xS_x \mid E_{\tilde X},\ & (x\in \tilde X) \\
		\text{for each } x\in \tilde X, & S_x & \to & E_{\tilde X\setminus\{x^{-1},y\}}yS_y \mid  E_{\tilde X\setminus \{x^{-1}\}} \mid \varepsilon,\ & (y\in \tilde X\setminus\{x^{-1}\}) \\
		\text{for each } \emptyset\neq M\subseteq\tilde X,& E_M & \to & xE_{\tilde X\setminus\{x^{-1}\}}x^{-1} E_{M\setminus\{x\}} \mid \varepsilon,\ & (x\in M).
	\end{array}
\end{equation*}
\end{theorem}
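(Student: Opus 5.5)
The plan is to prove the two inclusions $L(S)\subseteq\Geo(\FIM_X)$ and $\Geo(\FIM_X)\subseteq L(S)$ using only Lemma~\ref{passar3x}. That lemma says a word is geodesic exactly when the walk it traces in its Munn tree crosses no edge more than twice. The grammar is built in two layers, so I would first describe the languages of the variables $E_M$ and then those of $S_x$ and $S$. The claim to establish for the first layer is:
\begin{quote}
$L(E_M)$ is exactly the set of geodesic idempotent words $w$ such that every directed edge of $\MT(w)$ leaving or entering the root $\alpha(w)=\beta(w)$ has its outward label in $M$.
\end{quote}
Here the outward label of an edge at the root is the letter read when leaving the root along it.

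\textbf{The idempotent layer.} I would prove the claim by induction on $\ell(w)$; the case $w=\varepsilon$ is trivial.

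For the inclusion from the grammar into geodesics, take $w=x\,w'\,x^{-1}\,w''$ with $w'\in L(E_{\tilde X\setminus\{x^{-1}\}})$ and $w''\in L(E_{M\setminus\{x\}})$.
\begin{itemize}
\item By induction $w'$ is a closed geodesic walk at the endpoint of the $x$-edge. It never uses the edge labelled $x^{-1}$ there, so it stays inside the branch hanging off the $x$-edge, and the $x$-edge is crossed exactly twice.
\item The word $w''$ is a closed geodesic walk at the root whose first edges avoid the label $x$. Since Munn trees are deterministic, its subtree is disjoint from the $x$-branch, so no identification between the two parts occurs.
\item Hence every edge of $\MT(w)$ is crossed exactly twice, and Lemma~\ref{passar3x} gives that $w$ is geodesic. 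It is idempotent because the walk is closed.
\end{itemize}

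Conversely, let $w$ be a nonempty geodesic idempotent word with first letter $x$.
\begin{itemize}
\item By Remark~\ref{rem_vezes} the walk crosses the $x$-edge exactly twice, so we can write $w=x\,w'\,x^{-1}\,w''$, where the displayed $x^{-1}$ is the unique return across that edge.
\item The walk $w'$ is a closed walk at the child vertex that never takes the edge back to the root. So its root labels lie in $\tilde X\setminus\{x^{-1}\}$, and it is geodesic by Lemma~\ref{passar3x}.
\item The walk $w''$ never re-enters the $x$-edge, since that would be a third crossing. So its root labels lie in $M\setminus\{x\}$.
\end{itemize}
Induction then places $w$ in $L(E_M)$.

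\textbf{The main-path layer.} Next I would prove the following claim. For $x\in\tilde X$, $L(S_x)$ is the set of words $v$ such that $xv$ is geodesic and, in $\MT(xv)$, the walk $v$ never re-crosses the incoming $x$-edge. Then $L(S)=\Geo(\FIM_X)$ follows from the analogous statement at the root with no restriction.

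To prove this claim I would use the decomposition $u=e_1u_1e_2\cdots$ letter by letter along the main path.
\begin{itemize}
\item Write a geodesic as $w=e\,y\,v$, where $y$ is the first letter of the main path and $e$ is the idempotent prefix read at the current vertex.
\item By Remark~\ref{rem_vezes} the main-path edge $y$ is crossed exactly once. So $e$ cannot use the $y$-edge, since that would add two more crossings.
\item Inside $S_x$, the prefix $e$ also cannot use the $x^{-1}$-edge back to the previous vertex, since that edge is already on the main path.
\item This explains the index sets $\tilde X\setminus\{x\}$ and $\tilde X\setminus\{x^{-1},y\}$, and the requirement $y\neq x^{-1}$ expresses that the main-path word is freely reduced.
\item The terminal productions $E_{\tilde X}$ and $E_{\tilde X\setminus\{x^{-1}\}}$ account for the trailing idempotent $e_{n+1}$ when the main path has ended. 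The production $S_x\to\varepsilon$ covers the case where that idempotent is trivial.
\end{itemize}
The converse direction again glues pieces whose subtrees are disjoint by determinism, and Lemma~\ref{passar3x} finishes it.

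\textbf{Main obstacle.} The delicate point is the disjointness, or gluing, argument. I must show that when subwalks are concatenated at a vertex using pairwise distinct initial labels, the resulting Munn tree is exactly the union of the pieces. Equivalently, no folding merges edges, so crossing counts simply add. I would handle this directly from the construction of Munn trees: an edge with a given label out of a given vertex is unique, so walks beginning with distinct labels at a vertex live in disjoint subtrees. For the main-path induction, I would also have to check carefully that the idempotent pieces attach at the correct vertex.
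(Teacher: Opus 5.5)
Your argument is correct. For this theorem the paper itself only cites \cite{[CBM26]}, but your scheme matches the one it uses for its own variant grammar in the proof that $\UConjGeo(\FIM_X)$ is context-free: there, $L(F_M)$ is shown to be the geodesic idempotents with root labels in $M$ (Statements 1 and 6), $L(S_{y,M})$ is shown to consist of geodesics with no root label $y^{-1}$ (Statement 2), and glued pieces are checked with Lemma~\ref{passar3x}. One point to make explicit is the convention $E_\emptyset\to\varepsilon$ (the analogue of the paper's $F_\emptyset\to\varepsilon$): $E_{M\setminus\{x\}}$ can be $E_\emptyset$, and your characterization of $L(E_M)$ for $M=\emptyset$ depends on that production.
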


\section{Conjugacy Languages}\label{conj_lang}
 Inspired by the notion of conjugacy languages introduced in \cite{[CHHR16]} for groups, in \cite{[CBM26]}, a definition of a language of representatives associated with an equivalence relation in a semigroup is introduced. Here, we present this definition for free inverse monoids. For any equivalence relation $\sim$ on $\FIM_X$, we define the language
$$\sim\text{--}\,\Geo=\{u\in \tilde X^*: \ell(u)=\min\{\ell(v)\in \tilde X^*: u\pi \sim v\pi\}\}.$$

The following is easy to see for arbitrary semigroups, although we will only present it here for free inverse monoids, as they are the object of study in this paper.

\begin{lemma}\cite[Lemma 3.1]{[CBM26]}
\label{lemma_lang}
	Let $X$ be a finite set. If $\sim_1$ and $\sim_2$ are equivalence relations on $\FIM_X$ such that $\sim_1\,\subseteq\, \sim_2$, then $\sim_2\text{-}\Geo_X(\FIM_X)\subseteq\,\sim_1\text{-}\Geo_X(\FIM_X)$.
\end{lemma}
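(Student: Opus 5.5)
The statement should follow by unwinding the definition of $\sim\text{--}\,\Geo$ and comparing two minima. Fix a word $u\in\sim_2\text{-}\Geo_X(\FIM_X)$; we must show $u\in\sim_1\text{-}\Geo_X(\FIM_X)$. For an equivalence relation $\sim$ and $g\in\FIM_X$, write
\[
m_\sim(g)=\min\{\ell(v): v\in\tilde X^*,\ v\pi\sim g\}.
\]
This minimum exists because the set is nonempty (any representative of $g$ itself lies in it, by reflexivity) and lengths are natural numbers. With this notation, $u\in\sim\text{--}\,\Geo$ means exactly that $\ell(u)=m_\sim(u\pi)$.

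The key step is the monotonicity of $m_\sim$ in the relation. Since $\sim_1\subseteq\sim_2$, every word $v$ with $v\pi\sim_1 u\pi$ also satisfies $v\pi\sim_2 u\pi$. So the set over which $m_{\sim_1}(u\pi)$ is taken is contained in the set defining $m_{\sim_2}(u\pi)$, and a minimum over a larger set can only be smaller: $m_{\sim_2}(u\pi)\le m_{\sim_1}(u\pi)$.

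It remains to get the reverse inequality. Reflexivity of $\sim_1$ gives $u\pi\sim_1 u\pi$, so $u$ itself belongs to the set defining $m_{\sim_1}(u\pi)$, and hence $m_{\sim_1}(u\pi)\le\ell(u)$. Combining this with the hypothesis $\ell(u)=m_{\sim_2}(u\pi)$ and the previous paragraph yields
\[
\ell(u)=m_{\sim_2}(u\pi)\le m_{\sim_1}(u\pi)\le \ell(u),
\]
so equality holds throughout. Thus $\ell(u)=m_{\sim_1}(u\pi)$, that is, $u\in\sim_1\text{-}\Geo_X(\FIM_X)$.

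There is no genuine obstacle. The only point requiring care is to use reflexivity of $\sim_1$ (so that $u$ competes in its own class), rather than only the inclusion of relations. Nothing specific to $\FIM_X$ is used, which is consistent with the remark that the statement holds for arbitrary semigroups.
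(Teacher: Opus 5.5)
Your proof is correct: the inclusion $\sim_1\subseteq\sim_2$ gives $m_{\sim_2}(u\pi)\le m_{\sim_1}(u\pi)$, reflexivity of $\sim_1$ gives $m_{\sim_1}(u\pi)\le\ell(u)$, and combining these with $\ell(u)=m_{\sim_2}(u\pi)$ gives the required equality. The paper does not prove this itself. It cites \cite[Lemma 3.1]{[CBM26]} and remarks that the statement is easy for arbitrary semigroups, and your unwinding of the definitions is exactly that argument, valid verbatim in that generality.
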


We now consider the definition in the case where the equivalence relation $\sim$ is the conjugacy $\sim_i$ introduced above. This is the natural semigroup version of the group-theoretic notion of conjugacy languages introduced in \cite{[CHHR16]}.

\begin{definition}
	Let $\FIM_X$ be the free inverse monoid generated by the finite set $X$. We define the following language.
	\begin{equation*}
			\ConjGeo(\FIM_X)= \{w\in \Geo(\FIM_X): \ell(w)\leq \ell(u),\ \forall u\in \tilde X^*\ \text{s.t.}\ [w\pi]=[u\pi]\}.
		\end{equation*}
\end{definition}

Notice that in the monogenic free inverse monoid $\ConjGeo(\FIM_X)=\Geo(\FIM_X)$, and so it is a context-free language by  \cite{[CBM26]}. Indeed, the geodesics representing any two elements in the same conjugacy class have the same length.

\begin{remark}
    By Remark \ref{remark_D_vs_Conj} and Lemma \ref{lemma_lang}, we have $\DGeo(\FIM_X) \subseteq \ConjGeo(\FIM_X)$.
\end{remark}

Consider a Munn tree with initial vertex $p$ and final vertex $q$. By a \emph{shift of the initial and final vertices}, we mean changing the initial vertex from $p$ to $p'$ and the final vertex from $q$ to $q'$ (where $p', q'$ are vertices of the tree) such that the label of the simple path from $p$ to $p'$ is equal to the label of the simple path from $q$ to $q'$.

 \begin{remark}\label{remark_conjgeo}
Given a Munn tree representing an element $g \in \text{FIM}_X$, by Theorem \ref{Conj_Munn}, the conjugates of $g$ in $\text{FIM}_X$ are precisely those elements whose Munn trees can be obtained from that of $g$ via a shift of the initial and final vertices.

Now, by Remark \ref{rem_vezes}, for a fixed Munn tree, the longer the main path, the shorter the length of a geodesic.
Hence, a geodesic is in $\ConjGeo(\FIM_X)$ if there is no shift of the initial and final vertices so that the length of the main path increases.
\end{remark}

In the free group case, both $\Geo(F_n)$ and $\ConjGeo(F_n)$ are regular. In \cite{[CBM26]} it is shown that $\Geo(\FIM_X)$ is both context-free and co-context-free, but not regular. We show that $\ConjGeo(\FIM_X)$ has a greater complexity, by showing that it is neither context-free nor co-context-free (with respect to the standard generating set).

\begin{theorem}
	Let  $|X|\geq 2$ and $\FIM_X$ be the free inverse monoid with basis $X$. Then $\ConjGeo(\FIM_X)$ is not a context-free language.
\end{theorem}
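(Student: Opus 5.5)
The plan is to argue by contradiction. If $\ConjGeo(\FIM_X)$ were context-free, then so would be its intersection with a suitable regular language $R$. I will choose $R$ so that this intersection is a concrete family of words described by simple arithmetic conditions on exponents, and then contradict Ogden's Lemma. Fix two distinct letters $a,b\in X$, which is possible since $|X|\ge 2$. Consider the words
\[
w(i,t,m)=a^{i}\,bb^{-1}\,a^{t}\,bb^{-1}\,a^{-m},\qquad i,t,m\ge 1,\ t>m .
\]
By Lemma~\ref{passar3x} each of these is a geodesic, since no edge of its Munn tree is crossed more than twice. The Munn tree consists of:
\begin{itemize}
\item a main path labelled $a^{i+t-m}$;
\item an idempotent edge labelled $b$ hanging at distance $i$ from $\alpha$;
\item a main idempotent branch at $\beta$ labelled $a^{m}b$, which continues the $a$-line beyond $\beta$ and ends with a $b$ edge.
\end{itemize}

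The first key step is to decide exactly which of these words lie in $\ConjGeo(\FIM_X)$. By Remark~\ref{remark_conjgeo} and Theorem~\ref{Conj_Munn}, I must list all labels $s$ of simple paths that can be read from both $\alpha$ and $\beta$, and compute the new distance between the endpoints. From $\beta$ the only readable simple paths are $a^{j}$ ($j\le m$), $a^{m}b$, and $a^{-j}$ followed possibly by the $b$ edge on the main path. From $\alpha$ the readable paths are $a^{j}$ ($j\le i+t-m$) and $a^{i}b$; there is no outgoing $b$ and no outgoing $a^{-1}$ at $\alpha$.
\begin{itemize}
\item Shifting by $a^{j}$ moves $\alpha$ forward along the main path and $\beta$ forward along its branch, so the main path length is preserved.
\item The only shift that can lengthen the main path is by a label of the form $a^{j}b$ that is readable from both ends. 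This happens exactly when $j=i=m$. In that case the new main path has length $1+(i+t-m-i)+(m+1)=i+t-m+2$, which is longer than before.
\end{itemize}
I also have to check that shifts by $a^{-j}$ from $\beta$ cannot be matched at $\alpha$; they cannot, since $\alpha$ has no incoming $a$-edge off the main path. Hence, with $R=a^{+}bb^{-1}a^{+}bb^{-1}(a^{-1})^{+}$,
\[
\ConjGeo(\FIM_X)\cap R=\{a^{i}bb^{-1}a^{t}bb^{-1}a^{-m}: t>m,\ i\neq m\}\cup(\text{words with }t\le m\text{, handled by the same analysis}).
\]
I will tighten $R$, or redo the computation for $t\le m$, so that the intersection is cleanly described by inequalities among $i,t,m$.

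The second step is the Ogden argument, which I expect to be the main obstacle. The constraint $i\neq m$ alone is context-free, so the non-context-freeness has to come from the interaction between $i\neq m$ and the second comparison involving $m$ and $t$. If the resulting language turns out to be context-free, I will modify the construction. The natural modification is to add a second $b$-edge on the main path, or to replace $b$ by a longer marker, so that membership is governed by two crossing comparisons, for instance $i\neq m$ together with $t\neq m$ where $t$ is the position of a second marker. This gives a language in the style of $\{a^ib^jc^k: i\neq j,\ j\neq k\}$-type crossing conditions, for which the standard factorial trick works.

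Concretely, I will take $m=p$ and set the other exponents to $p+p!$, where $p$ is the Ogden constant, and mark the positions of the $a^{-m}$ block. The case analysis then runs as follows:
\begin{itemize}
\item $v$ and $y$ contain the bracketing letters $b,b^{-1}$: pumping leaves $R$, a contradiction.
\item $v$ and $y$ lie inside a single block: pumping by a suitable power, at most $p!/\ell(v)$, makes $m$ equal to one of the other exponents while keeping the remaining inequality, so the pumped word is not in the intersection.
\item $v$ and $y$ touch two different blocks: since $vxy$ contains at most $p$ marked positions, only blocks adjacent to the marked one can be involved. Pumping down, or pumping up by a suitable power, then forces equality between the marked exponent and a non-adjacent exponent.
\end{itemize}
Arranging the exponents so that every one of these cases yields a word outside the intersection is the delicate part, and I would settle the exact exponents after fixing the final shape of $R$.
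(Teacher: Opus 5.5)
\textbf{There is a genuine gap.} The proposal never produces a regular language $R$ for which $\ConjGeo(\FIM_X)\cap R$ is shown to be non-context-free, and the one concrete candidate you give does not work.

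\textbf{Your candidate gives a context-free language.} Finish your own shift analysis for $R=a^+bb^{-1}a^+bb^{-1}(a^{-1})^+$. Note that from $\alpha$ you can also read $a^j$ for every $j\le i+t$, and also $a^{i+t}b$, because the $a$-line continues past $\beta$. The only shifts that lengthen the main path are those sending the two roots onto the two $b$-edges. Such a shift exists exactly when the root distance $|i+t-m|$ equals the marker distance $t$. Hence, in every regime of $t$ versus $m$,
\[
\ConjGeo(\FIM_X)\cap R=\{a^ibb^{-1}a^tbb^{-1}a^{-m} : i,t,m\ge 1,\ m\neq i,\ m\neq i+2t\}.
\]
For example, $a\,bb^{-1}a\,bb^{-1}a^{-3}$ (length $9$) is conjugate to $b^{-1}a^{-3}a^{2}b$ (length $7$).

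This language is the union of the three pieces given by $m<i$, $i<m<i+2t$ and $m>i+2t$. Each piece is a nested language, hence context-free. So the intersection is context-free, and Ogden's Lemma cannot give a contradiction. The restriction $t>m$ you worked under is only a case split, not a membership condition. It cannot be imposed by a regular language, so it cannot supply the ``crossing comparison'' you were hoping for.

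\textbf{The fallback is where the whole theorem lies, and it is left unspecified.} You suggest a second marker or a longer marker, aiming for conditions like $i\neq m$ and $t\neq m$. But you do not fix $R$, do not determine which shifts are available, and do not check that the resulting conjunction of inequalities is genuinely non-context-free. There is a structural reason for caution. With finitely many markers on an $a$-line, each obstruction to membership is a linear equation of the form ``root distance equals the distance between two markers, with matching offset''. Such equations tend to be nested, exactly as in the example above.

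\textbf{How the paper gets around this.} It uses unboundedly many markers arranged by parity: $R=(a^{-1})^+(bb^{-1}a^2)^+(abb^{-1}a)^+(a^{-1})^+$. The $b$-edges sit at even positions in the first half of the chain and at odd positions in the second. Membership then depends on whether both roots can be slid into the same half at positions carrying $b$-branches. This couples the initial $a^{-1}$-exponent with the lengths of both halves. Ogden's Lemma, with the initial $a^{-1}$-block marked, is then handled by a parity argument and by comparing $2\ell(v)$ with $\ell(x)$.

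\textbf{A smaller error in your Ogden case analysis.} You claim that only blocks adjacent to the marked block can be involved. This is false: Ogden's Lemma bounds only the number of marked positions in $vxy$, not its length. So one pumped factor may lie in any unmarked block, however far from the marked one.
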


\begin{proof}
	Assume that it is and consider the language
	$$L=\ConjGeo(\FIM_X)\cap (a^{-1})^+(bb^{-1}a^2)^+(abb^{-1}a)^+(a^{-1})^+$$

	Then by assumption $L$ is a context-free language, and so it satisfies the Ogden's Lemma. Let $p$ be the constant from the lemma and $w$ be the word
	$$w=a^{-(2p-1)}(bb^{-1}a^2)^p(abb^{-1}a)^pa^{-1},$$
	whose Munn tree is represented below.
	
	\begin{center}
		\begin{tikzpicture}[
			vertex/.style={circle, draw, inner sep=1.5pt, fill=white},
			>={Stealth[length=5pt]},
			every node/.style={font=\small}
			]
			
			% vértices
			\node[vertex] (v0) at (0,0) {};
			\node[vertex] (v1) at (1.2,0) {};
			\node[vertex] (v2) at (2.4,0) {};
			\node[vertex] (v3) at (3.6,0) {};
			\node[vertex] (v4) at (4.8,0) {};
			\node[vertex] (v5) at (6.0,0) {};
			\node[vertex] (v6) at (7.2,0) {};
			\node[vertex] (v7) at (8.4,0) {};
			\node[vertex] (v8) at (9.6,0) {};
			
			% linha horizontal (em partes)
			\draw[blue] (v0) -- (v1);
			
			\node at (1.8,0) {\color{blue}$\cdots$};
			
			\draw[blue] (v2) -- (v3);
			
			\draw (v3) -- (v4);
			\draw (v4) -- (v5);
			\draw (v5) -- (v6);
			
			\node at (7.8,0) {$\cdots$};
			
			\draw[red] (v7) -- (v8);
			
			% ramos verticais
			\node[vertex] (u0) at (0,1.3) {};
			\node[vertex] (u3) at (2.4,1.3) {};
			\node[vertex] (u6) at (6.0,1.3) {};
			\node[vertex] (u8) at (8.4,1.3) {};
			
			\draw[blue] (v0) -- (u0);
			\draw[blue] (v2) -- (u3);
			\draw[red] (v5) -- (u6);
			\draw[red] (v7) -- (u8);
			
			% setas
			\draw[->] (3.6,0.5) -- (v3);
			\draw[->] (v7) -- (8.6,0.4);
			
			% chavetas
			\draw[decorate,decoration={brace,mirror,amplitude=5pt}]
			(v0.south) -- (v4.south)
			node[midway,below=7pt] {$2p$};
			
			\draw[decorate,decoration={brace,mirror,amplitude=5pt}]
			(v4.south) -- (v8.south)
			node[midway,below=7pt] {$2p$};
		\end{tikzpicture}
	\end{center}

  By Definition~\ref{def_main/idempotent}, in this Munn tree, the main path is the chain of $a$'s between the two rooted vertices (edges in black), with all other edges belonging to main idempotent branches. All main idempotent branches consist of a single edge (edges in red), with the exception of the branch located to the left of the initial vertex (edges in blue). The latter includes multiple edges, and whenever an edge labeled $b$ occurs and we are not at the end of the chain of $a$'s, two new idempotent branches are originated.
    
	First, observe that in order for the elements to stay in $L$, each pumped factor must have one of the following forms: $a^{-k}$, $(bb^{-1}a^2)^k$ or $(abb^{-1}a)^k$, $k\in \N$.  Note that $w$ is clearly a geodesic (by Lemma \ref{passar3x}).

	Also, for any geodesic word of the form $a^{-n_1}(bb^{-1}a^2)^{n_2}(abb^{-1}a)^{n_3}a^{-1}$, for some $n_1,n_2,n_3\in \N$, possible shifts of the initial and final vertices within the chain of $a$'s (the horizontal chain on the representation above)   does not change the length of a geodesic representing the element given by that tree (since the length of the main path remains unchanged),    but if we are able to position both rooted vertices simultaneously on vertices with idempotent branches of $b$'s, it would allow us to shift them up to those branches, increasing the length of the main path, and so decreasing the length of the geodesic word. 
    
    Thus, there are only two different types of shifts: exclusively within the chain of $a$'s (which we will refer to as shifts of the first kind), or into the idempotent branches of $b$'s, preceded by possible shifts in the chain of $a$'s (which we will refer to as shifts of the second kind), with only the second one changing the length of the geodesic word. 
    
    Hence, a word of this form will be in $L$ if a shift of the second kind is not possible.

    We prove that this is the case for $w$ and that for any  decomposition satisfying the conditions of the Ogden's lemma, there will always be a pump leading us to a contradiction.

     Observe that the Munn tree corresponding to $w$ naturally splits into two parts, the first representing the powers of $bb^{-1}a^2$ and the second one representing the powers of  $abb^{-1}a$. Therefore, when indexing the vertices along the bottom chain from left to right, the idempotent branches of $b$'s appear at even positions in the first component and at odd positions in the second.

	Since the parity of the powers of $a^{-1}$ ($2p-1$ and 1) is the same, after any shift of the first type, the initial and terminal vertices lie either both at even vertices or both at odd vertices. 

        Therefore, the word would fail to be in $\ConjGeo$ only if it were possible to place both vertices within the same part of the tree through a shift of the first type.

	However, the distance between the initial and terminal vertices is
	\[
	2p + 2p - (2p-1) - 1 = 2p.
	\]
	Hence this is not possible, and therefore $w \in L$.
	
	Now, we mark the first $2p-1$ $a^{-1}$'s and observe that in the decomposition $w=uvzxy$ of the Ogden's Lemma, where $w_n:=uv^nzx^ny\in L$ for all $n\in \mathbb{N}_0$, $v$ must be a subword of the first power of $a^{-1}$. Since the exponent of $a^{-1}$ is odd, if $\ell(v)$ is even then the first exponent of $a^{-1}$ in $w_n$ will always be odd. If $\ell(v)$ is odd, then the first exponent of $a^{-1}$ in $w_n$ alternates between even and odd. Further, in order to $w_n\in L$ for any $n$, the other pumped word, $x$, must be of the form $(bb^{-1}a^2)^s$ or $(abb^{-1}a)^s$, with $0\leq s\leq p$.
    Otherwise the word would not always be a geodesic, by Lemma \ref{passar3x}. 
    
    If $x$ was a subword of $(abb^{-1}a)^p$, then for a sufficiently large pump the initial vertex would also move into the second part of the Munn tree, and consequently, whenever the exponent is odd, $w_n$ would no longer lie in $L$, since a shift of the second type would be possible.  Therefore, $x$ must be of the form $(bb^{-1}a^2)^s,$ with $ 0\leq s\leq p$.

	Now observe that $\ell(x)$ is always a multiple of $4$, and each pump in the subword $(bb^{-1}a^2)$ adds two edges to the chain of $a$'s in the Munn tree. Moreover, if $2\ell(v) > \ell(x)$, then for sufficiently large $n$ the initial vertex will eventually lie in the second part of the Munn tree,  and passes to the right of the final vertex. At this point, by Lemma~\ref{passar3x}, the word is no longer a geodesic, yielding a contradiction.

    If $2\ell(v) = \ell(x)$, then for any $n$ the Munn tree is analogous to that of $w$, with the part to the left of the initial vertex becoming larger as $n$ increases. Consequently, for any $n\geq 2$, it is possible to place, with a shift of the first type to the left, both the initial and terminal vertices on the same part of the tree. Also, observe that since $\ell(x)$ is a multiple of 4, then $\ell(v)$ has to be even and so the exponent of the first power of $a^{-1}$ is always odd in every $w_n$, $n\geq 0$, which in particular implies that the initial and final vertices stay both at odd positions, and so, for $n\geq 2$, we are able to place them, again through a shift of the same type, simultaneously at vertices with idempotent branches of $b$'s (on the first part of the tree).

    Hence the word does not lie in $L$. Therefore, we must have $2\ell(v) < \ell(x)$.
	
	Further, notice that in this case, for any positive pump, the initial and terminal vertices of $w_n$ lie in different parts of the Munn tree. Therefore, if the first exponent of $a^{-1}$ is even (which occurs when $\ell(v)$ is odd), the word does not lie in $L$. Hence, $r:=\ell(v)$ is even.
    We can then write $\ell(x)=2r+s$ for some $s>0$. Since $\ell(x)$ and $2r$ are even, it follows that $s$ is also even.
	Thus, in $w_0$, the initial vertex will be in the position $2p-1-r$ and the first part of the Munn tree will have length $2p-2r-s$. Hence, from $2p-1-r>2p-2r-s$ we get $r+s>1$, and so
   the initial vertex lies in the second part of the Munn tree and in an odd position (\(2p-1-r\) is odd). Hence it lies in a vertex with an idempotent branch, and therefore the word does not lie in $L$, since both the initial and final vertices have an idempotent branch.
	We thus arrive at a contradiction, allowing us to conclude that $L$, and $\ConjGeo(\FIM_X)$ are not context-free languages.
\end{proof}

\begin{theorem}
	Let  $|X|\geq 2$ and $\FIM_X$ be the free inverse monoid with basis $X$. Then $\ConjGeo(\FIM_X)$ is not a co-context-free language.
\end{theorem}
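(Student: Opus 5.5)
The plan is to argue by contradiction, in parallel with the proof that $\ConjGeo(\FIM_X)$ is not context-free. Suppose the complement $\tilde X^*\setminus\ConjGeo(\FIM_X)$ is context-free. Since context-free languages are closed under intersection with regular languages, the language
\[
K=\bigl(\tilde X^*\setminus\ConjGeo(\FIM_X)\bigr)\cap R
\]
is context-free for every regular $R$. The point is to choose $R$ so that two things hold.
\begin{enumerate}
\item Every word of $R$ is a geodesic, which I would check using Lemma~\ref{passar3x}. Then $K$ consists exactly of the geodesics in $R$ that admit a length-increasing shift of the initial and final vertices (Remark~\ref{remark_conjgeo}).
\item Membership in $K$ reduces to an arithmetic condition on the block exponents that no context-free language can express.
\end{enumerate}

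As $R$ I would take the same kind of words as in the previous proof, with parity constraints imposed by the regular language:
\[
a^{-m}(bb^{-1}a^2)^{n}(abb^{-1}a)^{k}a^{-t}.
\]
Here $m$ and $t$ are both odd, and further regular restrictions can be added as needed. The Munn tree is again a chain of $a$'s, with $b$-leaves at even positions in the first part and at odd positions in the second part, counting from the left end. A word is \emph{not} in $\ConjGeo(\FIM_X)$ exactly when a shift of the first kind along the $a$-chain places both distinguished vertices on $b$-branch vertices of the same part. Such a shift may go left, as far as the leftmost vertex allows, or right, as far as the right end of the chain allows.

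The first concrete step is to compute this condition explicitly as a Boolean combination of linear inequalities in $m,n,k,t$. I expect the outcome to take the form ``$m+t\geq 2n+c$ and $t\leq 2k-c'$'', or some other condition that couples three of the exponents simultaneously. If the computation instead produces a condition involving only two exponents, which would make $K$ context-free, I would modify $R$. The options are to insert further blocks such as $(bb^{-1}a^2)^{n'}$ on the other side, or to fix the parities differently, until non-membership forces a comparison of the form $i\ge j\ge k$ between three independent counts. The model target is a language like $\{a^ib^jc^k : i\ge j\ge k\}$, which is not context-free. I would also confirm that no shift into deeper idempotent structure helps: the only nontrivial idempotent branches are single $b$-edges, or the long blue branch to the left, whose label differs from anything reachable on the other side.

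With such a description of $K$ in hand, the last step is Ogden's Lemma on a carefully chosen word. Take a word that lies in $K$ with all the relevant inequalities tight or nearly tight, and mark the positions of one block. Every pumped factor must be a power of one of the periods $a^{-1}$, $bb^{-1}a^2$ or $abb^{-1}a$, since otherwise the pumped word leaves $R$. One then checks that pumping up or down, with the correct choice of $i=0$ or $i$ large, breaks one of the inequalities while keeping the word in $R$, and hence geodesic and in $\ConjGeo(\FIM_X)$, giving the contradiction.

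The main obstacle is the first step. One must choose $R$ so that membership in the complement genuinely couples three counts rather than two, since a two-count condition is context-free, while keeping every word of $R$ geodesic. The shift analysis must also be carried out carefully in both directions, including the boundary effects at the ends of the $a$-chain.
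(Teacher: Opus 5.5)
\textbf{There is a genuine gap.} Your frame is Ogden's Lemma applied to the complement of $\ConjGeo(\FIM_X)$ intersected with a regular language, and that is the frame the paper uses. But the whole content of the proof lies in the step you defer, and for the family you propose that step fails.

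Take $w=a^{-m}(bb^{-1}a^2)^n(abb^{-1}a)^ka^{-t}$ with $m,t$ odd and $n,k\ge1$, and put $\alpha(w)$ at $0$. The $a$-chain is $[-m,-m+2n+2k]$, and $\beta(w)$ sits at $D=2n+2k-m-t$, which is even.
\begin{itemize}
\item The word is geodesic iff $D\ge0$; otherwise some $a$-edge to the left of $\alpha(w)$ is crossed three times.
\item For $D=0$ the word is an idempotent and lies in $\ConjGeo(\FIM_X)$.
\item For $D\ge2$, a main-path-increasing shift exists iff both vertices fit among the $b$-leaves of one part, i.e.\ iff $D\le 2\max(n,k)-2$.
\end{itemize}
Identifying words of your $R$ with their exponents, this gives
\begin{align*}
\bigl(\tilde X^*\setminus\ConjGeo(\FIM_X)\bigr)\cap R
&=\{m+t\ge 2n+2k+2\}\\
&\quad\cup\{2n+2\le m+t\le 2n+2k-2\}\cup\{2k+2\le m+t\le 2n+2k-2\}.
\end{align*}
Each piece is context-free, because $m$ and $t$ enter only through $m+t$, at the two ends of the word. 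A pushdown automaton can push $m$ symbols and consume two units per middle block. For the last two pieces, it instead consumes a nondeterministic $0$, $1$ or $2$ units per block of one of the two middle types. It goes into overdraft when the stack empties, matches what remains against $a^{-t}$, and checks the bounded offsets in its finite control.

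So no pumping argument can succeed with this $R$, even though three exponents are coupled. The relevant distinction is nested versus crossing dependence, not the number of counts.

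Your requirement~1 also fails for this family. Geodesicity is the non-regular condition $m+t\le 2n+2k$, and non-geodesic words belong to the complement. So a pump that destroys geodesicity yields no contradiction, contrary to what your last paragraph asserts.

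\textbf{The missing idea is a test family in which the improving shift is rigid.} The paper uses $R=(b^{-1})^+a^{-2}a^4(b^{-1})^+a^{-2}a^2b^+$, all of whose words are geodesic.
\begin{itemize}
\item For $b^{-m_1}a^{-2}a^4b^{-m_2}a^{-2}a^2b^{m_3}$, the initial vertex is the tip of a $b$-chain. So every shift must begin by descending that chain in lockstep with the final vertex.
\item The main path can be lengthened only if both vertices reach the bottoms of their $b$-chains simultaneously. This forces $m_1=m_3$.
\item The resulting shift by $b^{-m_1}a^{-2}$ gains length only if $m_3\le m_2+1$.
\end{itemize}
Hence the complement of $\ConjGeo(\FIM_X)$ in $R$ is exactly $\{m_1=m_3\le m_2+1\}$, a condition of the shape $\{x^iy^jz^i: i\le j+1\}$.

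Now take $m_1=m_2=m_3=p$ and mark the first block. Any pump other than equal-length pumps in the first and last blocks makes $m_1\ne m_3$. Equal-length pumps pushed up violate $m_3\le m_2+1$. Either way the pumped word lands in $\ConjGeo(\FIM_X)$, a contradiction.

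Until you produce an explicit $R$ of this kind and carry out the corresponding computation, your proposal is a search strategy rather than a proof.
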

\begin{proof}
	Assume that it is and consider the language
	$$L=(\tilde X^*\setminus \ConjGeo(\FIM_X))\cap (b^{-1})^+a^{-2}a^4(b^{-1})^+a^{-2}a^2b^+.$$
	
	Then, by assumption, $L$ is a context-free language, and so satisfies the Ogden's Lemma. Let $p$ be the constant from the lemma and $w$ be the word
	$$w=b^{-p}a^{-2}a^4b^{-p}a^{-2}a^2b^p$$
	
	\begin{center}
		\begin{tikzpicture}[
			vertex/.style={circle, draw, inner sep=1.2pt, fill=white},
			>={Stealth[length=5pt]},
			every node/.style={font=\small}
			]
			
			% ---- NÓ CENTRAL ----
			\node[vertex] (v0) at (0,0) {};
			
			% ---- RAMO HORIZONTAL (esquerda) ----
			\node[vertex] (v1) at (-1,0) {};
			\node[vertex] (v11) at (-2,0) {};
			
			\draw[red] (v0) -- (v1);
			\draw[red] (v1) -- (v11);
			
			% ---- RAMO VERTICAL PARA CIMA ----
			\node[vertex] (v2) at (0,2) {};
			\draw[-] (v0) -- ($(v0)!0.6!(v2)$);
			\node at ($(v0)!0.75!(v2)$) {$\vdots$};
			\draw[->] ($(v0)!0.95!(v2)$) -- (v2);
			
			\node[left] at ($(v0)!0.4!(v2)$) {$b$};
			
			\draw[decorate,decoration={brace,mirror,amplitude=5pt}]
			([xshift=7pt]v0.east) -- ([xshift=7pt]v2.east)
			node[midway,right=7pt] {$p$};
			
			% ---- RAMO VERTICAL PARA BAIXO (deslocado para a direita) ----
			\node[vertex] (u1) at (1,0) {};
			\node[vertex] (u2) at (2,0) {};
			\node[vertex] (v3) at (2,-2.5) {};
			
			\draw (v0) -- (u1);
			\draw (u1) -- (u2);
			
			\draw[-, red] (u2) -- ($(u2)!0.6!(v3)$);
			\node at ($(u2)!0.75!(v3)$) {\color{red}$\vdots$};
			\draw[->, red] ($(u2)!0.95!(v3)$) -- (v3);
			
			\node[left] at ($(u2)!0.4!(v3)$) {$b^{-1}$};
			
			\draw[decorate,decoration={brace,mirror,amplitude=5pt}]
			([xshift=8pt]v3.east) -- ([xshift=8pt]u2.east)
			node[midway,right=7pt] {$p$};
			
			% ---- RAMO HORIZONTAL EM BAIXO ----
			\node[vertex] (v4) at (1,-2.5) {};
			\node[vertex] (v5) at (0,-2.5) {};
			
			\draw[red] (v3) -- (v4);
			\draw[red] (v4) -- (v5);
			
			% ---- SETAS DE INÍCIO E\FIM ----
			\draw[->, thick] ($(v2)+(0,0.4)$) -- (v2);
			\draw[->, thick] (u2) -- ++(0,0.4);
			
		\end{tikzpicture}
	\end{center}

In this Munn tree, according to Definition~\ref{def_main/idempotent}, the main path consists of the black edges, while the two main idempotent branches are represented by the red edges. In this case, no new idempotent branches arise within the main idempotent branches.
    
	We claim that $w\in L$. In fact, by Lemma \ref{Conj_Munn}, it is easy to see that the word $w'=a^2b^pb^{-p}a^2b^{-p}a^{-2}$ is conjugated to $w$ and $\ell(w')<\ell(w)$.

	Now, let us mark the first $p$ $b^{-1}$'s. Observe that in order for the elements to stay in $L$, each pumped word must be a power of $b$ or of $b^{-1}$.
	
	Also, we claim that if the exponents of the initial $b^{-1}$ and of $b$ (which represent the distances of the initial and terminal vertices, respectively, to the lower part of their branches of $b$'s, i.e., the distances to the first and the second chains of $a$'s, respectively)  are different, then the word lies in $\ConjGeo(\FIM_X)$. This follows from Theorem \ref{Conj_Munn} and Remark \ref{remark_conjgeo}. 
    If the exponents differ, in most of the cases shifts can only occur along vertical edges, which never increase the length of the main path. The only exception is when the exponent of $b$ exceeds the second exponent of $b^{-1}$ (yielding a tree such as the one illustrated below), where shifts may also proceed along the upper horizontal chain of $a$'s provided the initial and terminal vertices are equidistant from it; this likewise does not increase the length of the main path, and so the word is in $\ConjGeo(\FIM_X)$.

	Therefore, given that under the chosen marking, one of the pumps must occur in the initial power of $b^{-1}$, it follows from the above argument that the second pump must occur in the power of $b$, and both pumps must have the same length to allow the exponents to remain equal.

	Now, considering the decomposition of the words \(w_n = u v^n z x^n y\), where \(v = b^{-\ell}\) and \(x = b^{\ell}\) for some \( \ell > 0 \), obtained from the original decomposition \( w = u v z x y \), we observe that, for a sufficiently large pumping parameter \( n \), we obtain a word whose Munn tree has the following form.
	\begin{center}
		\begin{tikzpicture}[
			vertex/.style={circle, draw, inner sep=1.2pt, fill=white},
			>={Stealth[length=5pt]},
			every node/.style={font=\small}
			]
			
			% ---- NÓ CENTRAL ----
			\node[vertex] (v0) at (0,0) {};
			
			% ---- RAMO HORIZONTAL (esquerda) ----
			\node[vertex] (v1) at (-1,0) {};
			\node[vertex] (v11) at (-2,0) {};
			
			\draw (v0) -- (v1);
			\draw (v1) -- (v11);
			
			% ---- RAMO VERTICAL PARA CIMA ----
			\node[vertex] (v2) at (0,2) {};
			\draw[-] (v0) -- ($(v0)!0.6!(v2)$);
			\node at ($(v0)!0.75!(v2)$) {$\vdots$};
			\draw[->] ($(v0)!0.95!(v2)$) -- (v2);
			
			\node[left] at ($(v0)!0.4!(v2)$) {$b$};
			
			\draw[decorate,decoration={brace,mirror,amplitude=5pt}]
			([xshift=7pt]v0.east) -- ([xshift=7pt]v2.east)
			node[midway,right=7pt] {$>p$};
			
			% ---- RAMO VERTICAL PARA BAIXO (deslocado para a direita) ----
			\node[vertex] (u1) at (1,0) {};
			\node[vertex] (u2) at (2,0) {};
			\node[vertex] (v3) at (2,-2.5) {};
			
			\draw (v0) -- (u1);
			\draw (u1) -- (u2);
			
			\draw[-] (u2) -- ($(u2)!0.6!(v3)$);
			\node at ($(u2)!0.75!(v3)$) {$\vdots$};
			\draw[->] ($(u2)!0.95!(v3)$) -- (v3);
			
			\node[left] at ($(u2)!0.4!(v3)$) {$b^{-1}$};
			
			\draw[decorate,decoration={brace,mirror,amplitude=5pt}]
			([xshift=8pt]v3.east) -- ([xshift=8pt]u2.east)
			node[midway,right=7pt] {$p$};
			
			% ---- RAMO HORIZONTAL EM BAIXO ----
			\node[vertex] (v4) at (1,-2.5) {};
			\node[vertex] (v5) at (0,-2.5) {};
			
			\draw (v3) -- (v4);
			\draw (v4) -- (v5);
			
			% ---- RAMO VERTICAL PARA CIMA ----
			\node[vertex] (u3) at (2,2) {};
			\draw[-] (u2) -- ($(u2)!0.6!(u3)$);
			\node at ($(u2)!0.75!(u3)$) {$\vdots$};
			\draw[->] ($(u2)!0.95!(u3)$) -- (u3);
			
			\node[left] at ($(u2)!0.4!(u3)$) {$b$};
			
			\draw[decorate,decoration={brace,mirror,amplitude=5pt}]
			([xshift=7pt]u2.east) -- ([xshift=7pt]u3.east)
			node[midway,right=7pt] {$>0$};
			
			% ---- SETAS DE INÍCIO E\FIM ----
			\draw[->, thick] ($(v2)+(0,0.4)$) -- (v2);
			\draw[->, thick] (u3) -- ++(0,0.4);
		\end{tikzpicture}
	\end{center}
	
	where the power of \( b \) can be made arbitrarily larger than the second power of \( b^{-1} \). Consequently, we observe that as soon as the exponent of $b^{-1}$ exceeds that of $b$ by at least two letters, and given that the initial and final vertices can only move within the branches of the powers of $b$ and advance by two vertices in $a^{-1}$, it immediately follows that $w_n \in \ConjGeo$ in this case. In other words, when this occurs, shifting the initial and final vertices downward reduces the length of the main path, which subsequently increases by at most 4 units. The main path is then maximized in $w_n$ and thus we arrive at a contradiction, allowing us to conclude that $L$, and consequently $ \ConjGeo(\FIM_X)$, are not co-context-free languages.
	
\end{proof}

\section{Elements with a trivial conjugacy class}\label{sec: uconj}

The goal of this section is to show that the ``difficulty" to describe all words in $\ConjGeo$ lies in those representatives corresponding to elements whose conjugacy class is nontrivial. To formalize this idea, we define an equivalence relation as follows: two distinct elements are related if and only if their conjugacy classes are nontrivial. In other words, this relation isolates all elements with trivial conjugacy class, while grouping all remaining elements of the monoid into a single equivalence class. We denote this equivalence relation by UConj\footnote{We call Uconj as it separates elements that are \textbf{u}nique in their conjugacy class while grouping all others by taking their union ($\boldsymbol{\cup}$)}. 

In the case of groups the elements with trivial conjugacy class are the central ones. However,  even though $Z(\FIM_X)=\{1\}$, notice that in any free inverse monoid of rank at least 2 there are ``many'' elements with a trivial conjugacy class. As we will see in Proposition~\ref{ser_conjugado}, the only restriction on the Munn tree of such an element is that there is no directed edge leaving both the initial and the terminal vertex with the same label, that is,  this condition completely describes the Munn trees representing elements with trivial conjugacy classes.

In particular, unlike in the group case, discussed in the last section of this paper, the elements with trivial conjugacy class do not coincide with the elements of the center.

In analogy with the approach taken in \cite{[CBM26]} and in the previous section, we consider the languages of geodesics of minimal length within each equivalence class of UConj.
\begin{definition}
	Let $X$ be a finite set and $\FIM_X$ the free inverse semigroup generated by $X$. We define the language of geodesics $\UConjGeo(\FIM_X)$ as follows.
	\begin{equation*}
			\UConjGeo(\FIM_X)=\{w\in \Geo(\FIM_X): \ell(w)\leq \ell(u), \forall u\in \Geo(\FIM_X) ((w\pi,u\pi)\in \text{UConj})\}.
	\end{equation*}
\end{definition}

Since we are interested in studying language-theoretic properties, the complexity of $\UConjGeo$ is essentially the same as the complexity of the geodesic words representing elements that are only conjugate to themselves. In fact, the difference between the two languages is a set of $2|X|$ words of length $2$.

\begin{proposition}\label{prop_desc_uconj}
	Notice that, for any finite set $X$, we have 
	\begin{equation*}
		\UConjGeo(\FIM_X)=\{w\in \Geo(\FIM_X): [w\pi]=\{w\pi\}\}\cup \{xx^{-1}: x\in \tilde X\}
	\end{equation*}
\end{proposition}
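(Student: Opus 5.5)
The plan is to unfold the definition of $\UConj$ and treat its two kinds of classes separately.

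\emph{Trivial classes.} If $[w\pi]=\{w\pi\}$, the $\UConj$-class of $w\pi$ is $\{w\pi\}$. Every geodesic representing $w\pi$ then has minimal length in that class, so $w\in\UConjGeo(\FIM_X)$. Conversely, any $w\in\UConjGeo(\FIM_X)$ whose element has trivial class lies in the first set on the right-hand side.

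\emph{The nontrivial class.} All elements with a nontrivial conjugacy class form one $\UConj$-class $C$. A geodesic $w$ with $w\pi\in C$ lies in $\UConjGeo(\FIM_X)$ exactly when $\ell(w)=\min\{|g|: g\in C\}$. So it suffices to show two things: this minimum equals $2$, and the geodesics of length $2$ representing elements of $C$ are exactly the words $xx^{-1}$ with $x\in\tilde X$. I will do this by checking every element of length at most $2$, using Theorem~\ref{Conj_Munn} in the form of Remark~\ref{remark_conjgeo}. An element has a nontrivial class if and only if some shift of the initial and final vertices, along a nonempty path label read from both $\alpha$ and $\beta$, yields a different birooted tree.

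\emph{Length $0$ and $1$.} For the identity, the Munn tree is a single vertex, so no nonempty shift exists and $[1]=\{1\}$. For a letter $x$, the tree is a single edge from $\alpha$ to $\beta$ labelled $x$. From $\alpha$ the only readable letter is $x$, while from $\beta$ the only readable letter is $x^{-1}$. Hence no common label exists, and the class is trivial.

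\emph{Length $2$.} A geodesic of length $2$ is either freely reduced, $xy$ with $y\neq x^{-1}$, or of the form $xx^{-1}$ (Lemma~\ref{passar3x}).
\begin{itemize}
\item For $xy$ with $y\neq x^{-1}$, the tree is a path $\alpha\xrightarrow{x}\cdot\xrightarrow{y}\beta$. The only letter readable from $\alpha$ is $x$ and the only one readable from $\beta$ is $y^{-1}\neq x$, so the class is trivial.
\item For $xx^{-1}$, we have $\alpha=\beta$ and a single edge labelled $x$ leaves this vertex. Shifting both roots along $x$ gives the tree of $x^{-1}x$, which is a different element. Hence $xx^{-1}\pi\in C$.
\end{itemize}
The element $xx^{-1}\pi$ has no other geodesic representative, since any length-$2$ word whose Munn tree is a single $x$-edge with both roots at its origin must be $xx^{-1}$.

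Combining these cases, the minimum length over $C$ is $2$, attained exactly by the $2|X|$ words $xx^{-1}$ with $x\in\tilde X$, which gives the stated equality.

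The only delicate step is the length-$2$ case analysis: I must ensure that every length-$2$ geodesic is covered and that the shift criterion is applied correctly in both directions. Everything else follows directly from the definitions.
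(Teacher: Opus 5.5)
Your proof is correct and follows the same route as the paper. The paper also argues that single letters have trivial conjugacy class, so the shortest representatives of the nontrivial $\mathrm{UConj}$-class have length $2$, and that among elements of length $2$ only the idempotents $xx^{-1}$ have a nontrivial class. You carry out these checks explicitly with the Munn-tree shift criterion, and you also handle the identity, which the paper leaves implicit.
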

\begin{proof}
	It follows from observing that the conjugacy class of single letter elements are trivial, and so in the non trivial equivalence class of UConj, the shortest representatives must have size two. Within all elements of size 2, the only ones with a non trivial conjugacy classes are the idempotents, and so the equality holds.
\end{proof}

We now present an immediate consequence of Theorem \ref{Conj_Munn}.

\begin{proposition}\label{ser_conjugado}
	For any $w\in \Geo(\FIM_X)$, we have that $w\in \UConjGeo(\FIM_X)$ if and only if either $w$ represents an idempotent of length 2 or the Munn tree representing $w$ has no directed edges starting in the initial and final vertices with the same label.
\end{proposition}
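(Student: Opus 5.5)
By Proposition~\ref{prop_desc_uconj}, $\UConjGeo(\FIM_X)$ consists of the geodesics $w$ with $[w\pi]=\{w\pi\}$, together with the words $xx^{-1}$, $x\in\tilde X$. The words $xx^{-1}$ are exactly the geodesics representing idempotents of length $2$. So the plan is to prove that, for a geodesic $w$ with Munn tree $(\Gamma,\alpha,\beta)$, the conjugacy class of $w\pi$ is trivial if and only if no label $y\in\tilde X$ labels a directed edge leaving both $\alpha$ and $\beta$. Here a directed edge labelled $y$ leaving a vertex means either an edge labelled $y$ starting there, or an edge labelled $y^{-1}$ ending there, as read along the tree. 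Everything will be read off Theorem~\ref{Conj_Munn} and Remark~\ref{remark_conjgeo}: the conjugates of $w\pi$ are exactly the elements $(\Gamma,\alpha',\beta')$ such that the simple path from $\alpha$ to $\alpha'$ and the simple path from $\beta$ to $\beta'$ have the same label.

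For the direction ``bad edge $\Rightarrow$ nontrivial class'', suppose $y$ labels an edge $\alpha\to\alpha'$ and an edge $\beta\to\beta'$. Then $(\Gamma,\alpha',\beta')$ is conjugate to $w\pi$, so it only remains to check that it is a different element. If $\alpha'\neq\alpha$ this is clear, since the Munn tree with its two distinguished vertices determines the element. This is automatic because $\alpha'$ is the other endpoint of an edge at $\alpha$, so it is distinct from $\alpha$ (the tree has no loops). Hence the class is nontrivial.

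For the converse, suppose no common label leaves $\alpha$ and $\beta$, and let $(\Gamma,\alpha',\beta')$ be a conjugate with $\alpha'\neq\alpha$. The simple path from $\alpha$ to $\alpha'$ is nonempty; let $y$ be its first letter. The path from $\beta$ to $\beta'$ has the same label, so it also starts with a $y$-edge at $\beta$, which is a contradiction. Thus $\alpha'=\alpha$, the common label is empty, and so $\beta'=\beta$. Hence $[w\pi]=\{w\pi\}$. Combining both directions with Proposition~\ref{prop_desc_uconj} gives the statement.

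The main subtlety lies in reconciling the exceptional words. An idempotent $xx^{-1}$ has $\alpha=\beta$ with an outgoing edge $x$, so it violates the edge condition; it must therefore be listed separately, which is exactly what the statement does. Conversely, I must check that no other idempotent of length $2$ slips in, and that every geodesic satisfying the edge condition genuinely has trivial class. This is the content of the converse argument, which uses only the tree structure, namely the uniqueness of simple paths. I would also check that ``directed edge starting at a vertex'' is interpreted so as to include traversing an edge backwards, since paths in Theorem~\ref{Conj_Munn} may read inverse letters. This convention is the step I would state most carefully.
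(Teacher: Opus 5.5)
Your proof is correct and is the argument the paper intends. The paper gives no written proof and calls the statement an immediate consequence of Theorem~\ref{Conj_Munn} together with Proposition~\ref{prop_desc_uconj}; your two directions (shifting both roots along a common outgoing label, and reading a common first letter off the two equal path labels) spell that out. The one justification to repair is why $(\Gamma,\alpha',\beta')$ is a different element when $\alpha'\neq\alpha$. What you need is not that a birooted tree determines its element, but that re-rooting gives a non-isomorphic birooted tree. This holds because a nonempty finite subtree of the Cayley graph of the free group cannot be invariant under translation by a nontrivial element, which has infinite order.
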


\begin{remark}
    An immediate consequence of this proposition is that a reduced word is in $\UConjGeo(\FIM_X)$ if and only if the first letter is not the inverse of the last one, i.e., if and only if it is cyclically reduced.
\end{remark}

In \cite{[CBM26]}, a description of the Green languages using Munn trees was presented. Specifically, for the monogenic free inverse monoid generated by $\{a\}$, it was shown that $\DGeo$ is $a^* \cup (a^{-1})^*$, that is, the set of elements whose Munn trees have fixed vertices at opposite ends of the chain. Consequently, it was proved that $\DGeo$ is a regular language.
Keeping this observation and Remark \ref{remark_conjgeo} in mind, we present the following immediate proposition.
\begin{proposition}
	For the monogenic free inverse monoid generated by $X=\{a\}$, we have that
	\begin{equation*}
		\begin{aligned}
			\ConjGeo(\FIM)&=\Geo(\FIM)\\
			\UConjGeo(\FIM)&=\DGeo(\FIM)\cup \{aa^{-1}, a^{-1}a\}
		\end{aligned}
	\end{equation*}
	and so in particular $\UConjGeo(\FIM)$ is regular for the monogenic free inverse monoid.
\end{proposition}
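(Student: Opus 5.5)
The statement has three claims, and I would prove them in order. First, $\ConjGeo(\FIM)=\Geo(\FIM)$; this was already observed after the definition of $\ConjGeo$. Second, the description of $\UConjGeo(\FIM)$. Third, regularity, which follows at once because $\DGeo(\FIM)=a^*\cup(a^{-1})^*$ is regular (by \cite{[CBM26]}) and adding two words preserves regularity.

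For the first equality I would argue directly from Remark~\ref{remark_conjgeo}. In the monogenic case the Munn tree is a chain of $a$-edges. By Theorem~\ref{Conj_Munn}, a shift moves both distinguished vertices by the same signed amount along the chain, since the labels of the two simple paths must coincide. So the main path keeps its length, and by Remark~\ref{rem_vezes} every geodesic in a conjugacy class has the same length. Hence every geodesic is conjugacy-minimal.

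For the second equality I would apply Proposition~\ref{prop_desc_uconj}. It then suffices to show that, for a geodesic $w$, the class $[w\pi]$ is trivial if and only if $\alpha(w)$ and $\beta(w)$ lie at opposite ends of the chain. This is the same as saying $w\in a^*\cup(a^{-1})^*$, since a geodesic crosses each edge at most twice (Lemma~\ref{passar3x}) and crosses the main-path edges exactly once. Proposition~\ref{ser_conjugado} gives the criterion: $[w\pi]$ is trivial precisely when no directed edge with the same label leaves both $\alpha(w)$ and $\beta(w)$. In the chain, the only labels available are $a$ and $a^{-1}$, the latter meaning an $a$-edge traversed backwards. There are two cases.
\begin{itemize}
\item If both vertices are endpoints, one at each end, then the only edges leaving them carry labels $a$ and $a^{-1}$ respectively, or there are no edges at all when the tree is trivial. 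In either case the criterion holds.
\item If some vertex, say $\alpha$, is interior, it has both an $a$-edge and an $a^{-1}$-edge leaving it. Then any edge leaving $\beta$ matches one of them. An edge does leave $\beta$ unless the tree is a single vertex, which is the word $\varepsilon\in a^*$.
\end{itemize}
The case where both vertices sit at the same end of a nontrivial chain is also nontrivial, because then the same label leaves both vertices. These are exactly the idempotents, together with the exceptional words $aa^{-1}$ and $a^{-1}a$ supplied by Proposition~\ref{prop_desc_uconj}.

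The main obstacle is a small bookkeeping point: the words $aa^{-1}$ and $a^{-1}a$ must be treated separately, since their classes are nontrivial yet they belong to the language by Proposition~\ref{prop_desc_uconj}. The empty word must also be correctly counted in $\DGeo$.
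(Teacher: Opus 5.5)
Your proof is correct and takes the same route as the paper, which treats the proposition as immediate. Shifts along the chain preserve the length of the main path, so $\ConjGeo=\Geo$; a geodesic has trivial conjugacy class exactly when its distinguished vertices sit at opposite ends of the chain, i.e.\ when it lies in $a^*\cup(a^{-1})^*=\DGeo(\FIM)$, and $aa^{-1}$, $a^{-1}a$ are added back by Proposition~\ref{prop_desc_uconj}. One minor slip: your remark that the same-end case gives ``exactly the idempotents'' is inaccurate, since idempotents with an interior base vertex fall under your interior-vertex case, but your case analysis is still exhaustive, so the argument is unaffected.
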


In Section \ref{conj_lang}, it is proved that $\ConjGeo(\FIM_X)$ is not context-free for $|X|\geq 2$. We now show that the complexity comes from the classes with two or more elements. Indeed, putting 
$$\ConjGeo_{\geq 2}(\FIM_X):=\{w\in \Geo(\FIM_X): |[w\pi]|\geq 2 \,\wedge\, (\ell(w)\leq \ell(u),\ \forall u\in \tilde X^*\ \text{s.t.}\ [w\pi]=[u\pi])\}$$
we have that $$\ConjGeo(\FIM_X)=\UConjGeo(\FIM_X)\cup \ConjGeo_{\geq 2}(\FIM_X),$$
and we prove that $\UConjGeo(\FIM_X)$ is context-free.

\begin{theorem}
	For any finite set $X$, we have that $\UConjGeo(\FIM_X)$ is a context-free language
\end{theorem}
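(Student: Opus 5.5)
We will prove the statement by modifying the grammar of Theorem~\ref{grammar_geo} so that it keeps track of the labels of the directed edges leaving the initial vertex and forbids them at the final vertex. By Proposition~\ref{ser_conjugado}, apart from the finitely many words $xx^{-1}$ ($x\in\tilde X$), a geodesic $w$ lies in $\UConjGeo(\FIM_X)$ exactly when no label $y\in\tilde X$ labels both a directed edge leaving $\alpha(w)$ and a directed edge leaving $\beta(w)$. Here a directed edge labelled $y^{-1}$ leaving a vertex is the same as a $y$-edge entering it. Since context-free languages are closed under finite unions, it suffices to show that
\[
L=\{w\in\Geo(\FIM_X): \text{no label leaves both } \alpha(w) \text{ and } \beta(w)\}
\]
is context-free. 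Moreover, $L$ is a finite union over pairs of disjoint subsets $A,B\subseteq\tilde X$, where $A$ is the set of labels leaving the initial vertex and $B$ the set leaving the final vertex. So we will build, for each such pair, a grammar for the geodesics with exactly these outgoing label sets.

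The labels leaving the vertices are read off from the unique decomposition $w=e_1u_1e_2\cdots e_nu_ne_{n+1}$ produced by the grammar of Theorem~\ref{grammar_geo}.

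\textbf{Case $n\geq 1$.} The labels leaving $\alpha(w)$ are the first letter $x$ of $u_1\cdots u_n$ together with the first letters of the idempotent branches in $e_1$. In the grammar, $e_1$ is generated by $E_{\tilde X\setminus\{x\}}$ as a product $x_1E\,x_1^{-1}x_2E\,x_2^{-1}\cdots$ with distinct $x_i$, so these labels are exactly $\{x_1,x_2,\dots\}\cup\{x\}$. Symmetrically, the labels leaving $\beta(w)$ are the inverse $z^{-1}$ of the last letter $z$ of the reduced word, together with the first letters of the branches in $e_{n+1}$.

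\textbf{Case $n=0$.} Here $w=e_1$ is an idempotent, $\alpha=\beta$, and every outgoing label is shared. Hence $w\in L$ only if $w$ is empty, which is a trivial one-element case.

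For $n\geq 1$ we will refine the variables as follows.
\begin{itemize}
\item Introduce variables $F_M$ that generate $E_M$-idempotents using exactly the label set $M$ at the root. The production is $F_M\to xE_{\tilde X\setminus\{x^{-1}\}}x^{-1}F_{M\setminus\{x\}}$ with $F_\emptyset\to\varepsilon$, and the inner $E$'s are left unchanged.
\item Start with $S\to F_{A'}\,x\,S_{x}^{B}$, where $A'=A\setminus\{x\}$ and $x\in A$. This fixes the initial label set to be exactly $A$.
\item The variables $S_y^B$ carry the target set $B$ unchanged through the middle productions $S_y^B\to E_{\tilde X\setminus\{y^{-1},y'\}}\,y'\,S_{y'}^B$.
\item Terminate with $S_y^B\to F_{B\setminus\{y^{-1}\}}$, allowed only when $y^{-1}\in B$ and $B\setminus\{y^{-1}\}\subseteq\tilde X\setminus\{y^{-1}\}$. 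This fixes the final label set to be exactly $B$.
\end{itemize}
Since $A$, $B$ and the sets $M$ range over finitely many subsets of $\tilde X$, this is a finite context-free grammar. One could equally intersect with a regular language instead, but the labels leaving $\beta$ cannot be read off regularly in an obvious way, so we prefer to build the grammar explicitly.

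The main obstacle is checking that this refinement is faithful. We must verify that the labels leaving $\alpha$ and $\beta$ are exactly those described, and that no other tree edge touches these vertices. In particular, when $n=1$ and the reduced word has length one, the vertices $\alpha$ and $\beta$ are adjacent. The edge $x$ then leaves $\alpha$ and $x^{-1}$ leaves $\beta$, which is consistent with the description above, but the disjointness condition must still rule out, for example, $x\in B$. We will argue this via Remark~\ref{rem_vezes} and uniqueness of the decomposition: the idempotent branches attached at $\alpha$ are precisely the factors of $e_1$, and those attached at $\beta$ are precisely the factors of $e_{n+1}$, as in Remark~\ref{idemp_decomp}. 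Correctness then follows from Theorem~\ref{grammar_geo}, and we conclude by taking the union over disjoint pairs $(A,B)$ and adding the words $xx^{-1}$.
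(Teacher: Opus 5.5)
Your proposal is correct and follows essentially the paper's route. The paper also refines the geodesic grammar of Theorem~\ref{grammar_geo} with subset-indexed variables, but instead of fixing the disjoint pair $(A,B)$ in advance, it carries the complement $R(u)$ of the labels used at $\alpha$ through a single grammar (via $E_M$ and $S_{y,M}$) and terminates with $F_{M\setminus\{y^{-1}\}}$ only when $y^{-1}\in M$. You should state explicitly the side condition $y'\neq y^{-1}$ in your middle productions, which is inherited from $S_x$ in Theorem~\ref{grammar_geo} and is needed to keep the main path reduced.
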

\begin{proof}
	We prove that $\UConjGeo(\FIM_X)':=\UConjGeo(\FIM_X)\setminus \{xx^{-1}:x\in \tilde X\}$ is context-free. The intended will then follow since context-free languages are closed under finite unions.
	
	Let $$V=\{I\}\cup \{F_M\mid M\subseteq \widetilde X\}\cup \{E_M\mid \emptyset\neq M\subseteq \widetilde X\}\cup  \{S_{y,M}\mid y\in \widetilde X ,\,\emptyset\neq M\subseteq \widetilde X\}$$ be a set of variables. Consider the following grammar with starting symbol $I$, set of terminals $\widetilde X$, set of variables $V$ and the following production rules:

\begin{equation*}
		\begin{aligned}
			I&\to xF_{\tilde X \setminus \{x^{-1}\}} x^{-1} E_{\tilde X\setminus \{x\}}\, |\, xS_{x,\tilde X\setminus\{x\}}, |\, \varepsilon,\ x\in \tilde X,\\
			F_M&\to yF_{\tilde{X}\setminus \{y^{-1}\}}y^{-1}F_{M\setminus \{y\}}\, |\, \varepsilon,\ y\in M,\ \emptyset \neq M\subseteq \tilde{X}, \\
            F_{\emptyset} &\to \varepsilon, \\
			E_M&\to yF_{\tilde{X}\setminus \{y^{-1}\}}y^{-1}E_{M\setminus \{y\}}\, |\, yS_{y,M\setminus\{y\}},\ y\in M,\ \emptyset \neq M\subseteq \tilde{X}, \\
			S_{y,M} &\to F_{\tilde X\setminus \{y^{-1}, x\}}xS_{x,M},\ y\neq x^{-1},\ \emptyset \neq M\subseteq \tilde X, \\
			S_{y,M} &\to F_{M\setminus \{y^{-1}\}},\ y^{-1}\in M,\ \emptyset \neq M\subseteq \tilde X.
		\end{aligned}
	\end{equation*}

	Let $L$ be the context-free language generated by this grammar. We claim that $\UConjGeo(\FIM_X)'=L$.
	
Before diving into the details of the proof, we start with an informal description of the grammar. 

Recall that as mentioned in Remark \ref{idemp_decomp},  any geodesic idempotent can be decomposed in its components, each associated with one main idempotent branch leaving the initial vertex of its Munn tree.

Consider a geodesic word $u$ with standard decomposition $e_1 u_1 e_2 \cdots e_n u_n e_{n+1}$ as described in Section \ref{sec_munn}, i.e., $u_1 u_2 \cdots u_n$ is a freely reduced word, $e_i \in E(\FIM_X)\setminus\{1\}$ for $i \in \{2, \ldots, n\}$, and $e_1, e_{n+1} \in E(\FIM_X)$.

The nonterminals $F_M$ generate exclusively idempotent elements: every derivation starting from $F_M$ produces an idempotent word and no other type of element, with all labels of edges leaving the initial vertices belonging to $M$.  
The nonterminals $E_M$ specify the possible initial letters of each component of the idempotent $e_1$, whenever such components exist, together with the first letter of $u_1$, within the set $M$.

In this way, $E_M$ determines the edges leaving the initial vertex of the Munn tree, while simultaneously identifying the set of all letters that do \emph{not} label edges departing from that vertex (notice that since we are describing elements in $\UConjGeo(\FIM_X)$, Proposition \ref{prop_desc_uconj} must be satisfied).  
Finally, the nonterminals $S_{x,M}$ focus on the construction of the main path, corresponding to the letters of the reduced word, while simultaneously introducing the nonterminals $F_M$ to subsequently generate the idempotent subwords.

We now proceed to the rigorous proof.
    
	First, we will show that $L\subseteq \UConjGeo(\FIM_X)'$.
	To simplify the reading of the proof, we divide it in the proof of several statements that will allow us to reach our goal.
	
    Consider $w\in L$.
	
	\begin{itemize}
		\item Let us justify that $L\subseteq \Geo(\FIM_X)$.
		
		\begin{itemize}
			\item \textit{Statement 1:} For any $\emptyset\neq M\subseteq \tilde X$, we have that any word $e$ in $L(F_M)$ is a geodesic idempotent word with outgoing labels at $\alpha(e)$ in $M$.
			\begin{proof}
				Let $e$ be a word deduced from $F_M$, for some $M\subseteq \tilde{X}$. We prove the intended by induction on the number $k$ of deductions. If $k=1$, then $e=\varepsilon$ and $e$ is a geodesic idempotent word. If $k=3$ we can only obtain words of the form $yy^{-1}$, $y\in \tilde X$, which are also geodesic idempotent words with $y\in M$, by definition of the grammar. Assume that for any $M\subseteq\tilde X$, any word deduced from $F_M$ requiring a number of deductions $\leq k$ is a geodesic idempotent word with outgoing labels at $\alpha(e)$ in $M$. Also, assume that $e$ requires $k+1>1$ deductions.
				Then the first deduction is of the form $F_M\to  yF_{\tilde{X}\setminus \{y^{-1}\}}y^{-1}F_{M\setminus \{y\}}$, for some $y\in M\subseteq \tilde X$. By induction hypothesis, from $F_{\tilde{X}\setminus \{y^{-1}\}}$ and $F_{M\setminus \{y\}}$ we deduce geodesic idempotent words, say $f_1$ and $f_2$, respectively, with the sets $\tilde{X}\setminus \{y^{-1}\}$ and $M\setminus \{y\}$ being in the conditions of the statement.  Now, no component of $f_1$ can start with $y^{-1}$ (since $y^{-1}\notin \tilde{X}\setminus \{y^{-1}\}$) and so $yf_1y^{-1}$ is still a geodesic word representing an idempotent. Similarly, since $y\notin M\setminus\{y\}$, none of the components of $f_2$ start with $y$ and so $yf_1y^{-1}f_2=e$ is a geodesic  word representing an idempotent. Further, the outgoing labels at $\alpha(e)$ are the outgoing labels at $\alpha(f_2)$, which by hypothesis are all in $M\setminus \{y\}$, together with the letter $y$, which is in $M$ by the definition of the grammar, and so, all outgoing labels at $\alpha(e)$ are in $M$, as intended. 
			\end{proof}
			\item \textit{Statement 2:} For any $\emptyset\neq M\subseteq \tilde X$ and any $y\in \tilde X$, we have that any word $u$ in $L(S_{y,M})$ is a geodesic with no outgoing label at $\alpha(u)$ being $y^{-1}$.
			\begin{proof}
				We argue by induction on the number $k$ of deductions. Observe that $k\geq 2$. If $k=2$, then the word is $\varepsilon$, which is a word satisfying the statement. If $k=4$, then it is clear the $u$ is either a single letter that has to be different than $y^{-1}$ or is of the form $xx^{-1}$ where again $x\neq y^{-1}$. Now, assume that the intended is verified for any word requiring up to $k$ deductions and that a word $u$ requires $k+1$ deductions. Then, we have the following possibilities for the first deduction
				\begin{enumerate}
					\item[(A)] $
					S_{y,M}\to F_{\tilde X\setminus \{y^{-1}, x\}}xS_{x,M},\ y\neq x^{-1},\ \emptyset \neq M\subseteq \tilde X
					$
					\item[(B)] $S_{y,M}\to F_{M\setminus \{y^{-1}\}},\ y^{-1}\in M,\ \emptyset \neq M\subseteq \tilde X$
				\end{enumerate}
				
				Notice that case (B) is immediate by \textit{Statement 1}.
				
				Regarding case (A), we can write $u=exv$, where $e$ is deduced from $F_{\tilde X\setminus \{y^{-1}, x\}}$, which is a geodesic word representing an idempotent with no outgoing label at $\alpha(e)$ being $y^{-1}$ ($y^{-1}\notin \tilde X\setminus \{y^{-1}, x\}$), by \textit{Statement 1}, and $v$ is deduced from $S_{x,M}$ requiring up to $k$ deductions, and so is a geodesic word. Let $v=f_1v_1\cdots f_n v_n f_{n+1}$ be its standard decomposition.
				Now, notice that none of the components of $e$ starts with $x$ (since $x\notin \tilde{X}\setminus \{y^{-1},x\}$), which implies that $ex$ is a geodesic word. Similarly, by induction hypothesis, no initial components of $v$, either the first letter of each component of $f_1$ or the first letter of $v_1$ can start with $x^{-1}$, and so $u=exv$ is a geodesic word. Also, observe that the outgoing labels at $\alpha(u)$ are the outgoing labels at $\alpha(e)$ together with $x$. As mentioned before, $y^{-1}$ is not a label of a directed edge leaving $\alpha(e)$ and by definition of the grammar, $x\neq y^{-1}$, and so we get the intended.
			\end{proof}
			\item \textit{Statement 3:} Any word $u$ in $L(E_M)$, for some $\emptyset\neq M\subseteq \tilde X$, is a geodesic with outgoing labels at $\alpha(u)$ in $M$.
			\begin{proof}
				We prove the intended by induction on the number $k$ of deductions. We have that the minimum number of deductions is $k= 3$ and that a word deduced from $E_M$ after only $3$ steps must be a single letter $y$ such that $y\in M$ (and $y^{-1}\in M$, by definition of the grammar):
				$$
				E_M\to yS_{y,M\setminus\{y\}} \to yF_{M\setminus \{y,y^{-1}\}}\to y\varepsilon=y,
				$$
				which is a geodesic satisfying the conditions of the statement.

				Suppose now that any word deduced from $E_M$, for any $M\subseteq \tilde{X}$ using up to $k$ deductions is a geodesic with outgoing labels at the initial vertex in $M$. Further assume that $u$ is a word deduced from $E_M$ requiring $k+1$ deductions, for some $\emptyset\neq M\subseteq \tilde X$. We have two possibilities for the first deduction
				\begin{enumerate}
					\item[(A)] $E_M\to yF_{\tilde{X}\setminus \{y^{-1}\}}y^{-1}E_{M\setminus \{y\}},\ y\in M$
					\item[(B)] $E_M\to yS_{y,M\setminus\{y\}},\ y\in M$
				\end{enumerate}
				In case (A), we can write $u=yey^{-1}v$, where $e$ and $v$ can be deduced from $F_{\tilde{X}\setminus \{y^{-1}\}}$ and $E_{M\setminus \{y\}}$, respectively, requiring up to $k$ deductions. Hence, from \textit{Statement 1}, $e$ is a geodesic representing an idempotent and, by the induction hypothesis, $v$ is a geodesic. 
				
				Now, by \textit{Statement 1}, none of the edges leaving the initial vertex of the Munn tree representing $e$ has label $y^{-1}$ (since $y^{-1}\notin \tilde{X}\setminus \{y^{-1}\}$), and so $yey^{-1}$ is also a geodesic idempotent. Also, by induction hypothesis, all outgoing labels at $\alpha(v)$ are in $M\setminus\{y\}$, and so, if we consider the standard decomposition of $v=f_1v_1\cdots f_nv_nf_{n+1}$ none of the first letters of each component of $f_1$ (if it is non trivial) can be $y$, and that the first letter of $v_1$ cannot be $y$ too, and so $yey^{-1}v$ is also a geodesic. Clearly, by the arguments presented above, all outgoing labels at $\alpha(u)$ are in $M$.

				Regarding case (B), we can write $u=yv$, where $v$ can be deduced from $S_{y,M\setminus\{y\}}$. By \textit{Statement 2}, $v$ is a geodesic, and if we consider the standard decomposition $v=f_1v_1\cdots f_nv_nf_{n+1}$, we have that none of the first letters of each of the components of $f_1$ (when it exists) or the first letter of $v_1$ equals $y^{-1}$, and so $yv$ is also a geodesic. 
			\end{proof}
			
			\item \textit{Statement 4:} All words in $L(I)$ are geodesics.
			\begin{proof}
				We argue by induction on the number of deductions. 
				
				Observe that with one deduction we can only obtain the empty word (which is a geodesic). There are no words deducible with 2 deductions and with 3 we obtain a word of a single letter, which is clearly  geodesic.
				
				Now, assume that any word deduced from $I$ requiring up to $k$ deductions is a geodesic, and let $u$ be a word requiring $k+1$ deductions from $I$. 
				Notice that we have two possibilities for the first deductions, both analogous to the proof of \textit{Statement 3}, and so, the intended follows by induction and reproducing previous arguments.
			\end{proof}
		\end{itemize}
		
		Hence, we have proved that $L\subseteq \Geo(\FIM_X)$.

		\item \textit{Statement 5:} All geodesics in $L(I)$ satisfy Proposition \ref{ser_conjugado}.
			\begin{proof}
				We want to show that in the Munn tree of a geodesic deduced from $I$ there are no directed edges starting in the initial and final vertices with the same label. 

            Clearly, the empty word satisfies Proposition \ref{ser_conjugado}. 
            Let $u=e_1u_1\cdots u_ne_{n+1}$ be a non empty geodesic word deduced from $I$.
            
            Let $R(u):=\{a\in \tilde X:\ a\ \text{is not a label of an outgoing edge at } \alpha(u)\}.$ 
            
            We have two possibilities for the first deduction
            \begin{itemize}
                \item[(A)] $I \to xF_{\tilde X \setminus \{x^{-1}\}} x^{-1} E_{\tilde X\setminus \{x\}},\ x\in \tilde X$

                \item[(B)] $I\to xS_{x,\tilde X\setminus\{x\}},\ x\in \tilde X$
            \end{itemize}

            In case $(A)$, let $e_1=f^1_1f^1_2\cdots f^1_k$, be as described in Remark \ref{idemp_decomp}, let $x_1,\cdots, x_k$ be the first letters of $f^1_1,\cdots, f^1_k$, respectively, and let $y$ be the first letter of $u_1$. We claim that to deduce $u$ we can proceed as follows
            $$I\to^+ e_1yS_{y,R(u)}\to^* u.$$

            The $\to^+$ represents the deductions
            \begin{equation*}
                \begin{aligned}
                    I&\to x_1F_{\tilde X \setminus \{x_1^{-1}\}} x_1^{-1} E_{\tilde X\setminus \{x_1\}}\underbrace{\to^*}_{\text{by Stat. 1}} f^1_1E_{\tilde X\setminus \{x_1\}}\to f^1_1x_2F_{\tilde X\setminus \{x_2^{-1}\}}E_{\tilde X\setminus \{x_1,x_2\}} \\
                    &\underbrace{\to^*}_{\text{By Stat. 1}} f^1_1f^1_2E_{\tilde X\setminus\{x_1,x_2\}}\to^*  e_1E_{\tilde X\setminus\{x_1,\cdots,x_k\} }\to e_1yS_{y,\underbrace{\tilde X\setminus \{x_1,\cdots x_k, y\} }_{=R(u)}}.
                \end{aligned}
            \end{equation*}
            
where the last $\to^*$ is the repetition of the previous steps for all $k$.

Notice that by definition of the grammar, the last deduction represented above is the last one determining an edge leaving $\alpha(u)$.

            Further, observe that the only deduction that will then be influenced by this set $R(v)$ is
            $$S_{y,M} \to F_{M\setminus \{y^{-1}\}},\ y^{-1}\in M,$$
            and by \textit{Statement 1}, from $F_M$ we only deduce idempotent geodesics with outgoing labels at the initial vertex in $M$. Hence, all idempotent branches of $e_{n+1}$,  which start at $\beta(u)$, have label in $R(u)$. Also, the condition $y^{-1}\in M$ in this deduction implies that the inverse of the label of the last edge of the main path, which arrives at the final vertex (which is a label of an edge leaving $R(u)$), has to be in $R(u)$, and so, the outgoing labels at $\beta(u)$ are in $R(u)$.

            Hence, we have
            \begin{equation}
            \begin{aligned}
                 &\text{The labels of the edges leaving $\alpha(u)$ are in $\tilde X\setminus R(u)$,} \\
                &\text{and the labels of the edges leaving $\beta(u)$ are in $R(u)$.}
            \end{aligned}
            \end{equation}

            Therefore, Proposition \ref{ser_conjugado} is satisfied, as intended. 
		\end{proof}
	\end{itemize}
	Therefore, $ L\subseteq \UConjGeo(\FIM_X)'$

	We now prove that $\UConjGeo(\FIM_X)'\subseteq L$. Consider $w\in \UConjGeo(\FIM_X)'$ and its standard decomposition
$$
w=e_1u_1\cdots e_nu_ne_{n+1}.
$$

Analogously to the previous inclusion, we prove the intended by proving a sequence of statements.

\begin{itemize}
	\item \textit{Statement 6:} Any geodesic idempotent is in $L(F_M)$ such that $M\subseteq \tilde{X}$ contains all labels of the edges leaving the initial vertex of the Munn tree representing the idempotent (i.e., the first letter of each component of the idempotent, in the sense of Remark \ref{idemp_decomp}).
		\begin{proof}
		We argue by induction on the length of idempotent words $e$ (recall that $e$ has necessarily even length). If $e$ is the empty word or if $|e|=2$ then it is immediate that $e\in L(F_M)$ with $M$ in the conditions described in the statement. Now, assume that $|e|=2k+2$ and that the result is valid for any $e$ of length at most $2k$.  We can write $e=e_1\ldots e_n$, where each $e_i$ represents one main idempotent branch. Let $M$ be any subset of $\tilde X$ containing the labels of evey directed edge leaving the initial vertex of the Mun tree of $e$ (i.e., containing the first letter of each $e_i$).
\begin{center}
	\begin{tikzpicture}[scale=1.3]
		
		% Nodo central
		\node[circle, fill=black, inner sep=2.5pt] (v) at (0,0) {};
		
		% Laço esquerdo (e_1)
		\draw (v) edge[loop left, min distance=1.1cm, in=250, out=160]
		node[left=8pt] {$e_1$} (v);
		
		% Laço superior (e_2) -- agora realmente maior
		\draw (v) edge[loop above, min distance=1.1cm, in=160, out=60]
		node[above=8pt] {$e_2$} (v);
		
		% Laço inferior-direito (e_n)
		\draw (v) edge[loop left, min distance=1.1cm, in=350, out=250]
		node[right=8pt] {$e_n$} (v);
		
		% Reticências colocadas entre e_2 e e_n
		\node at (0.3,0.05) {$\cdots$};
		
	\end{tikzpicture}
\end{center}

If $n=1$, then $e$ as only one branch and $e=xfx^{-1}$, with $f\in L(F_{\tilde X\setminus\{x^{-1}\} })$ by induction hypothesis. Observe that since $e$ is a geodesic, no directed edge leaving the initial vertex of the Munn tree of $f$ can have label $x^{-1}$, and so $\tilde X\setminus\{x^{-1}\}$ satisfy the conditions of the statement. Hence, there is a sequence of deductions
$$
F_{M} \to xF_{\tilde X\setminus\{x^{-1}\}} x^{-1} \underbrace{F_{M\setminus \{x\}}}_{\to \varepsilon} \to^+ x f x^{-1}=e,
$$

and so, $e\in L(F_M)$.

Now, if $n>1$, we can write $e_i=x_if_ix_i^{-1}$, where $x_i\in \tilde X$ is the first letter of $e_i$, for all $i$. Further, by the previous reasoning, we have $f_i\in L(F_{\tilde X \setminus \{x_i^{-1}\}})$. Since $e$ is a geodesic, each branch start with different letters, i.e., the letters $x_1,\ldots, x_n$ are all different, and so, by induction hypothesis, any $e_i$ can be deduced from $E_{\tilde X\setminus \{x_j: \text{ for all } j\neq i\}}$. Hence, if $M$ is a subset of $\tilde X$ such that $x_i\in M$, for all $i$, we can consider the deductions
\begin{equation*}
	\begin{aligned}
		F_M&\underbrace{\to}_{x_1\in M} x_1 F_{\tilde X\setminus\{x_1^{-1}\}} x_1^{-1} F_{M \setminus \{x_1\}} \\
		&\underbrace{\to}_{x_2\in M}  x_1 F_{\tilde X\setminus\{x_1^{-1}\}}x_1^{-1} x_2 F_{\tilde X\setminus\{x_2^{-1}\}} x_2^{-1} F_{\tilde X\setminus \{x_1,x_2\}}\to^+ e_1\ldots e_n=e
	\end{aligned}
\end{equation*}
     	\end{proof}
	
	\item \textit{Statement 7:} Any reduced word in  $\UConjGeo(\FIM_X)$ is in $L$.
    
		\begin{proof}
        Recall that a reduced word is in $\UConjGeo(\FIM_X)$ if and only if it is cyclically reduced (i.e., the first letter is not the inverse of the last one).
        
		Let $w=a_1\cdots a_n$ be a reduced word such that $a_1^{-1}\neq a_n$. Observe that since $w$ is reduced we have $a_i^{-1}\neq a_{i+1}$, for all $i$. We now exhibit a sequence of productions that leads to $w$.
		\begin{equation*}
			\begin{aligned}
				I&\to a_1S_{a_1,\tilde X\setminus\{a_1\}} \underbrace{\to}_{a_2\neq a_1^{-1}} a_1F_{\tilde X \setminus \{a_1^{-1},a_2\}}a_2 S_{a_2,\tilde X\setminus\{a_1\}} \to  a_1\varepsilon a_2 S_{a_2,\tilde X\setminus\{a_1\}}\\
				&\to^+ a_1\cdots a_{n}S_{a_{n},\tilde X\setminus\{a_1\}}
				  \underbrace{\to}_{a_n^{-1}\in \tilde X\setminus\{a_1\}} a_1\cdots a_{n}F_{\tilde X\setminus\{a_1, a_n^{-1}\}}\\
				  &\to a_1\cdots a_n \varepsilon = w.
			\end{aligned}
		\end{equation*} 
	\end{proof}
	
		\item \textit{Statement 8:} Any geodesic in $\UConjGeo(\FIM_X)$ of the form $ey$ where $e$ is a geodesic idempotent and $y\in \tilde X$ is in $L(I)$.
        
	\begin{proof}
		We can write $e=f_1\cdots f_k$, where each $f_i$ is a geodesic   representing a main idempotent branch. Let $x_1,\cdots, x_k$ be the first letters of  $f_1,\cdots, f_k$, respectively. Notice that $x_1,\cdots,x_k$ have to be all different.
		
		We start with the following deduction:
		$$
		I\to x_1F_{\tilde X\setminus \{x_1^{-1}\}}x_1^{-1}E_{\tilde X\setminus \{x_1\}}.
		$$
		Now, we can write $f_1=x_1f_1'x_1^{-1}$. Since $f_1$ is a geodesic, no edges leaving the initial vertex of the Munn tree representing $f_1'$ can have label $x_1^{-1}$ and so $f_1'$ can be deduced from $F_{\tilde X\setminus \{x^{-1}\}}$ by \textit{Statement 6}. 
       
        Then we have
		$$
		I\to^+ f_1E_{\tilde X\setminus\{x_1\}}\to f_1x_2F_{\tilde X\setminus \{x_2^{-1}\}}x_2^{-1}E_{\tilde X\setminus \{x_1,x_2\}} \to^+ eE_{\tilde X\setminus \{x_1,\cdots, x_k\}},
		$$
        where the second $\to^+$ represents the deductions made by applying the same reasoning to all other $f_i$.

	   Then, since $ey$ is a geodesic, we have $y\in M:=\tilde X\setminus \{x_1,\cdots, x_k\}$ and so
		$$
		eE_M\to eyS_{y,M\setminus \{y\}}.
		$$
		
		Now, since $ey\in \UConjGeo(\FIM_X)$ and $y^{-1}$ is the label of an outgoing edge at $\beta(ey)$, by Proposition \ref{prop_desc_uconj}, we have $y^{-1}\in M$, and so

		$$
		eyS_{y,M\setminus \{y\}}\to eyF_{M\setminus \{y,y^{-1}\}}\to ey.
		$$
	\end{proof}

	\item \textit{Statement 9:} Let $y$ and $y'$ be the first letter of $u_1$ and the last letter of $u_n$, respectively. We can deduce $w$ as follows
    \begin{equation*}
        \begin{aligned}
            I&\to^+ e_1yS_{y,R(u)} \to^+ e_1u_1\cdots e_nu_nS_{y',R(u)}\to e_1u_1\cdots e_nu_nF_{R(u)\setminus \{y'^{-1}\} } \to^+ w.
        \end{aligned}
    \end{equation*}
		\begin{proof}
        Notice that the first $\to^+$ follows immediately from the proof of \textit{Statement 5}.
        
        For the second $\to^+$ observe the following. The only restriction for a geodesic to belong to $\UConjGeo(\mathrm{FIM}_X)$ is that it satisfies Proposition~\ref{prop_desc_uconj}, which imposes constraints solely on edges having at least one vertex among the rooted vertices. That is, in terms of the standard word decomposition, the restrictions are imposed exclusively on $e_1$, $e_{n+1}$, the first letter of $u_1$, and the last letter of $u_n$. In the remainder of the word (i.e., the middle portion), there are no restrictions whatsoever; hence, any subword may be produced, provided it is guaranteed to be a geodesic. Consequently, we can apply Theorem~\ref{grammar_geo} by observing the following symbol correspondence: what is denoted by $F_M$ in our grammar corresponds to $E_M$ in Theorem~\ref{grammar_geo}, whereas what is represented by $S_{x,M}$ in our grammar is denoted simply by $S_x$ in the context of geodesics.

        The second-to-last deduction is a consequence of the fact that $y'^{-1}\in R(w)$, since $w$ satisfies Proposition \ref{prop_desc_uconj} and $y'^{-1}$ is a label of an edge leaving $\beta(w)$.

        Finally, the last $\to^+$ follows from \textit{Statement 1}. Recall that $w$ satisfies Proposition \ref{prop_desc_uconj}, and so all first letters of each component of $e_{n+1}$ is in $R(w)$.

        We then conclude the intended. 
        \end{proof}
		
\end{itemize}

Hence, we have $\UConjGeo(\FIM_X)'\subseteq L$.

Therefore, $\UConjGeo(\FIM_X)'$ and $\UConjGeo(\FIM_X)$ are context-free languages.
\end{proof}

\begin{remark}
It is a straightforward exercise to prove that $\UConjGeo(\FIM_X)$ is not a regular language. Let $p$ be the pumping length given by the Pumping Lemma. Consider the word $w = a^p a^{-(p! + p)} b b^{-1} a b b^{-1}$. This word belongs to $\UConjGeo(\FIM_X)$. However, since the pumped segment must occur within the first $p$ symbols, it must consist entirely of a non-empty power of $a$. Pumping this segment will alter the exponent of the leading $a$ term. Eventually, this exponent will match the absolute value of the exponent of $a^{-1}$, causing the word to drop out of the language, which yields a contradiction.
\end{remark}

\section{$\UConjGeo$ in groups}
The purpose of this section is to explore   $\UConj$ languages in groups. As noted in the previous section, this corresponds to the language of geodesic words representing central elements, together with a finite set of words. We will study this language in virtually abelian and right-angled Artin groups, proving it is piecewise excluding and piecewise testable, respectively.

\begin{proposition}\label{diffinite}
	Let $G$ be a group. Then
	$$
	\UConjGeo(G)=\{w\in \Geo(G): w\pi\in Z(G)\} \cup F,
	$$
    where $F$ is a finite set of minimal representatives of the elements with a nontrivial conjugacy class.
\end{proposition}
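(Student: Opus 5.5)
We will prove the two inclusions by unwinding the definition of $\UConjGeo(G)$ for a group $G$ with finite generating set $X$ and natural epimorphism $\pi\colon\tilde X^*\to G$, following the argument of Proposition~\ref{prop_desc_uconj}. The first step is the standard group fact that $[g]=\{g\}$ if and only if $g\in Z(G)$: if $h^{-1}gh=g$ for all $h$, then $g$ commutes with every $h$, and conversely. Hence the $\UConj$-classes of $G$ are the singletons $\{z\}$ with $z\in Z(G)$, together with one further class $N=G\setminus Z(G)$, which is nonempty exactly when $G$ is nonabelian.

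Next we will treat the words representing central elements. Let $w\in\Geo(G)$ with $w\pi\in Z(G)$. Its $\UConj$-class is $\{w\pi\}$, so the defining condition of $\UConjGeo(G)$ only compares $w$ with other geodesics representing the same element. Since all geodesics of a given element have the same length, $w\in\UConjGeo(G)$. Conversely, a word of $\UConjGeo(G)$ representing a central element is a geodesic by definition. So the central part of $\UConjGeo(G)$ is exactly $\{w\in\Geo(G): w\pi\in Z(G)\}$.

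Finally, we will treat the words representing elements of $N$, and set $F=\{w\in\UConjGeo(G): w\pi\notin Z(G)\}$. If $N=\emptyset$ then $F=\emptyset$ and the statement holds. Otherwise let $m=\min\{|g|: g\in N\}$. A word $w$ with $w\pi\in N$ lies in $\UConjGeo(G)$ precisely when it is a geodesic of length $m$. Thus $F$ is the set of geodesics of length $m$ representing noncentral elements, i.e.\ the minimal representatives of the $\UConj$-class $N$, as the statement requires. Since $\tilde X$ is finite, $F\subseteq\tilde X^{m}$ is finite. This gives the decomposition $\UConjGeo(G)=\{w\in\Geo(G): w\pi\in Z(G)\}\cup F$.

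The only point needing care is the finiteness of $F$. It depends on the single class $N$ having one fixed minimal length $m$ and on $X$ being finite; there is no real obstacle beyond noting this. For comparison with the free inverse monoid case one can also observe that $m\le 1$ unless every generator is central. In that case $G$ is abelian and $N=\emptyset$, so in fact $F\subseteq\tilde X$ whenever $G$ is nonabelian, though this refinement is not needed.
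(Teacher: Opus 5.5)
Your proof is correct and follows the same reasoning the paper relies on. The paper gives no separate proof of this proposition, by analogy with Proposition~\ref{prop_desc_uconj}: in a group the trivial conjugacy classes are exactly the central elements, and the single noncentral $\UConj$-class contributes only its geodesics of one fixed minimal length, hence finitely many words. Your closing observation that $F\subseteq\tilde X$ when $G$ is nonabelian is also exactly what the paper later uses in the virtually abelian case, where it takes $F=S\setminus C$.
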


 We start by noting that in a hyperbolic group, this is always a regular languages. This is not surprising, as the center of an non-elementary hyperbolic group is finite. We start with a preliminary lemma.
 
\begin{lemma} \label{fi center}
Suppose that \(\Geo_X(G)\) is regular and that \(Z(G)\) has finite index in \(G\). Then \(\UConjGeo_X(G)\) is regular. 
\end{lemma}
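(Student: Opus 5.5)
By Proposition~\ref{diffinite}, $\UConjGeo_X(G)$ is the union of $\{w\in \Geo_X(G): w\pi\in Z(G)\}$ and a finite set $F$. Finite languages are regular and regular languages are closed under finite unions, so it suffices to show that
\[
L_Z:=\{w\in \Geo_X(G): w\pi\in Z(G)\}=\Geo_X(G)\cap \{w\in\tilde X^*: w\pi\in Z(G)\}
\]
is regular. Since $\Geo_X(G)$ is regular by hypothesis and regular languages are closed under intersection, the plan reduces to showing that the full preimage $\pi^{-1}(Z(G))\subseteq \tilde X^*$ is regular.

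For that, I use the standard fact that the full preimage of a finite-index subgroup is regular, via the coset automaton. Let $Z:=Z(G)$ and let $Zg_1,\dots,Zg_m$ be the finitely many right cosets of $Z$ in $G$. Define a DFA whose states are these cosets, with initial state $Z=Z\cdot 1$ and unique accepting state $Z$. The transition on a letter $x\in\tilde X$ sends $Zg$ to $Zg(x\pi)$. This is well defined because right multiplication permutes right cosets: if $Zg=Zg'$ then $Zg(x\pi)=Zg'(x\pi)$. By induction on $\ell(w)$, reading $w$ from the initial state ends in the state $Z(w\pi)$. Hence $w$ is accepted if and only if $Z(w\pi)=Z$, that is, $w\pi\in Z$. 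So $\pi^{-1}(Z)$ is accepted by a finite automaton and is therefore regular. (Normality of $Z$ is not needed here, although $Z$ is normal anyway.)

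Combining the steps, $L_Z$ is regular as an intersection of two regular languages, and $\UConjGeo_X(G)=L_Z\cup F$ is regular as a union of a regular language with a finite one. There is no real obstacle; the only point needing care is that Proposition~\ref{diffinite} really gives a finite set $F$. The nontrivial $\UConj$-class has a minimal length $n$ among its representatives, and there are only finitely many words of length $n$ over the finite alphabet $\tilde X$, so $F$ is finite and this step goes through.
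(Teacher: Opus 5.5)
Your proof is correct and takes essentially the paper's route: write $\UConjGeo_X(G)$ as $\Geo_X(G)\cap\pi^{-1}(Z(G))$ together with the finite set $F$, and use that the full preimage of a finite-index subgroup is regular. The differences are that you build the coset automaton explicitly where the paper only cites this fact, and you state the union with $F$ and its finiteness explicitly, which the paper's proof leaves implicit.
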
 
\begin{proof} 
Since $Z(G)$ has finite index in $G$, the language \[ \pi^{-1}(Z(G)) = \{w\in\widetilde X^* : w\pi\in Z(G)\} \] is regular (see \cite{[Ben79]}).
Consequently, \[ \Geo_X(Z(G)) = \Geo_X(G)\cap\pi^{-1}(Z(G)) \] is regular. 
\end{proof}

\begin{proposition} Let \(G\) be a hyperbolic group and let \(X\) be any finite generating set. Then \(\UConjGeo_X(G)\) is regular. \end{proposition}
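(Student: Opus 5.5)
We split into two cases according to whether $G$ is elementary. Throughout we use Cannon's theorem that $\Geo_X(G)$ is regular for every finite generating set $X$ of a hyperbolic group $G$ \cite{[Can84]}. We also use Proposition~\ref{diffinite}: $\UConjGeo_X(G)$ differs from $\Geo_X(Z(G))=\Geo_X(G)\cap\pi^{-1}(Z(G))$ only by a finite set. Since regular languages are closed under finite unions and removal of finite sets, it suffices to show that $\Geo_X(G)\cap\pi^{-1}(Z(G))$ is regular.

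\emph{Case 1: $G$ is non-elementary.} Here $Z(G)$ is finite. Indeed, a central element $z$ of infinite order would be loxodromic, and every element would have to commute with it. Then $G$ would be contained in the centralizer of $z$, which is virtually cyclic, a contradiction. Since $Z(G)$ is a finite set of elements, the language $\Geo_X(Z(G))$ consists of geodesics representing finitely many elements. Each such element has finitely many geodesic representatives, all of bounded length. Hence the language is finite, and in particular regular.

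\emph{Case 2: $G$ is elementary.} Then $G$ is finite or virtually $\mathbb{Z}$. If $G$ is finite, every language of geodesics is finite and we are done. Suppose $G$ is virtually infinite cyclic. If $Z(G)$ is infinite, it contains an element $z$ of infinite order, and $\langle z\rangle$ has finite index in $G$. Hence $Z(G)\supseteq\langle z\rangle$ has finite index, and Lemma~\ref{fi center} applies directly, using the regularity of $\Geo_X(G)$. If instead $Z(G)$ is finite, we conclude exactly as in Case 1: finitely many elements give finitely many geodesics.

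The only step needing care is the fact about centers, namely that $Z(G)$ is either finite or of finite index in $G$. In the non-elementary case this follows from the standard result that centralizers of infinite-order elements in hyperbolic groups are virtually cyclic, and I would simply cite it. The rest is bookkeeping with the closure properties of regular languages.
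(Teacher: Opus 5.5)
Your argument is correct and follows the paper's route. A finite center gives a finite language, and an infinite center forces $G$ to be virtually cyclic with $Z(G)$ of finite index, so Lemma~\ref{fi center} applies; you organize the cases as elementary versus non-elementary and spell out the justification that the paper leaves implicit. One small point: in Case~1 your centralizer argument only shows that $Z(G)$ is torsion, so to conclude that $Z(G)$ is finite you should also cite one of two standard facts: a hyperbolic group has only finitely many conjugacy classes of finite-order elements (and central elements form singleton classes), or torsion subgroups of hyperbolic groups are finite.
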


\begin{proof} 
The language \(\Geo_X(G)\) is regular for every finite generating set of a hyperbolic group. If \(Z(G)\) is finite, then $\UConjGeo(G)$ is finite, thus regular. Suppose that \(Z(G)\) is infinite. A hyperbolic group with infinite center is virtually cyclic, and in this case \(Z(G)\) has finite index in \(G\). The result therefore follows from the Lemma \ref{fi center}. \end{proof} 

\subsection{Right-angled Artin groups}

Let \(A\) be a finite alphabet. Given a word
\(
u=a_1\cdots a_k\in A^*,
\)
we say that \(u\) is a \emph{piecewise subword} of a word \(w\in A^*\)
if
\(
w\in A^*a_1A^*\cdots A^*a_kA^*.
\)

A language over \(A\) is \emph{piecewise testable} if it is a finite
Boolean combination of languages of the form
\(
A^*a_1A^*\cdots A^*a_kA^*.
\)

A language \(L\subseteq A^*\) is said to be \emph{piecewise excluding}
if there exists a finite set \(W\subseteq A^*\) such that a word belongs
to \(L\) if and only if it contains no element of \(W\) as a piecewise
subword. Every piecewise excluding language is piecewise testable.

We will now prove that, if $G$ is a right-angled Artin groups (RAAG), then $\UConjGeo(G)$ is piecewise testable, thus regular.
For basic facts on RAAGs, we refer the reader to \cite{[Cha07]}.
We remark that that $\Geo(G)$ and $\ConjGeo(G)$ are regular in case $G$ is a RAAG, but not necessarily piecewise testable. Moreover, it is shown in \cite[Theorem 2.5]{[CHHR16]} that \(\ConjGeo(G)\) is the language of geodesic words for which every cyclic permutation is also geodesic.
\begin{theorem}
Let \(A_\Gamma\) be the right-angled Artin group defined by a finite
simplicial graph \(\Gamma\), equipped with its standard generating set
\(X=V(\Gamma)\). Then
\(
\UConjGeo_X(A_\Gamma)
\)
is a piecewise testable language.
\end{theorem}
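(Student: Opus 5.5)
The plan is to use Proposition~\ref{diffinite}, which reduces the statement to two facts: the language $\{w\in\Geo_X(A_\Gamma): w\pi\in Z(A_\Gamma)\}$ is piecewise testable, and so is the finite set $F$. Piecewise testable languages form a Boolean algebra, so the union of the two is then piecewise testable.

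\emph{Identifying the center.} Let $C\subseteq V(\Gamma)$ be the set of vertices adjacent to every other vertex of $\Gamma$. I will use the standard fact that $Z(A_\Gamma)$ is the free abelian group generated by $C$ (see \cite{[Cha07]}). Next I will invoke the normal form theory of RAAGs. A word is geodesic if and only if it is reduced, that is, it contains no subword $x^{\epsilon}vx^{-\epsilon}$ with every letter of $v$ commuting with $x$. Moreover, all reduced words representing the same element have the same set of letters, up to shuffling commuting letters. Hence a geodesic word representing an element of $\langle C\rangle$ involves only letters from $C\cup C^{-1}$: otherwise its support would differ from the support of some word over $C^{\pm1}$ representing the same element. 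Conversely, a word over $C^{\pm1}$ lies in a free abelian group, so it is geodesic exactly when, for each $c\in C$, it does not contain both $c$ and $c^{-1}$.

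\emph{Piecewise excluding description.} Put $\tilde X=X\cup X^{-1}$. From the previous step,
\[
\{w\in\Geo_X(A_\Gamma): w\pi\in Z(A_\Gamma)\}
\]
is the set of words containing none of the following as a piecewise subword:
\begin{itemize}
\item the single letters $x$ with $x\in \tilde X\setminus (C\cup C^{-1})$;
\item the two-letter words $cc^{-1}$ and $c^{-1}c$ with $c\in C$.
\end{itemize}
This is a finite set of excluded piecewise subwords, so the language is piecewise excluding and therefore piecewise testable.

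\emph{The finite part.} For a single word $u$, the set $\{u\}$ is piecewise testable. It consists of the words that contain $u$ as a piecewise subword and contain no piecewise subword of length $\ell(u)+1$, and there are only finitely many words of that length. Thus $F$ is a finite union of piecewise testable languages, and the union with the central part finishes the proof.

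The main obstacle is the second step: justifying rigorously that a geodesic representing a central element cannot contain letters outside $C^{\pm1}$. I will handle it by citing the uniqueness of the letter support of reduced words in RAAGs, which is the content of the normal form theorem.
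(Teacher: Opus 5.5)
Your proof is correct, but the key step goes by a different route from the paper's.

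\textbf{Your route.} To show that a geodesic for a central element uses only letters of $C^{\pm1}$, you invoke the normal form theory of right-angled Artin groups. Geodesic words are exactly the reduced ones, and reduced words for the same element are shuffle-equivalent, so they have the same letters. The comparison word here should be a \emph{reduced} word over $C^{\pm1}$, such as $c_1^{n_1}\cdots c_k^{n_k}$.

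\textbf{The paper's route.} The paper uses the retraction $\rho_C\colon A_\Gamma\to A_C$ that kills every generator outside $C$. Since $\rho_C$ is the identity on $A_C=Z(A_\Gamma)$, deleting the noncentral letters from a word for a central element gives a word for the same element. That word is strictly shorter if any noncentral letter was present. Applying $\rho_C$ to a hypothetical shorter word also shows that $A_C$-geodesics remain $A_\Gamma$-geodesics. This only needs $\rho_C$ to be a well-defined homomorphism, so it is more elementary than citing the solution of the word problem.

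\textbf{A point to state explicitly.} In your last sentence of the second step, what you need is geodesicity in $A_\Gamma$, not just in the free abelian group $A_C$. It does follow from your reducedness criterion: a word over $C^{\pm1}$ containing no $c$ together with $c^{-1}$ has no subword $x^{\epsilon}vx^{-\epsilon}$. So there is no gap, but say so.

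\textbf{The abelian and finite parts.} The paper cites \cite[Proposition~6.2]{[HHR07]} for the abelian part. You instead write down the excluded set explicitly: the noncentral letters, together with $cc^{-1}$ and $c^{-1}c$ for $c\in C$. This makes the argument self-contained. You also justify why singletons are piecewise testable, which the paper takes for granted.

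\textbf{A possible strengthening.} Your explicit approach gives slightly more. For the standard generating set, $F$ is exactly the set of noncentral letters, or empty when $\Gamma$ is complete. So replace the excluded single noncentral letters by all two-letter words $xy$ with $x$ or $y$ noncentral. This shows that $\UConjGeo_X(A_\Gamma)$ is in fact piecewise excluding, as in the paper's virtually abelian theorem.
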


\begin{proof}
Write
\(
\widetilde X=X\cup X^{-1}.
\)
Let
\[
C=\left\{
x\in X :
x \text{ is adjacent to every vertex of }X\setminus\{x\}
\right\}.
\]
It is well known that the center of \(A_\Gamma\) is the special subgroup
generated by \(C\) (see \cite[Section 2.3]{[Cha07]}); that is,
\(
Z(A_\Gamma)=A_C
\)
and \(A_C\) is a free abelian
group with standard generating set \(C\).

It is enough to prove that the language
\(\Geo_X\bigl(Z(A_\Gamma)\bigr)
\)
of geodesic words over \(\widetilde X\) representing elements of
\(Z(A_\Gamma)\) is regular.

Consider the canonical homomorphism
\(
\rho_C\colon A_\Gamma\longrightarrow A_C,
\)
induced by
\[
\rho_C(x)=
\begin{cases}
x, & x\in C,\\
1, & x\in X\setminus C.
\end{cases}
\]
Let
\(
\widehat{\rho}_C\colon \widetilde X^*\longrightarrow \widetilde C^*
\)
be the induced monoid homomorphism, where
\(\widetilde C=C\cup C^{-1}\), given by
\[
\widehat{\rho}_C(x^{\pm1})=
\begin{cases}
x^{\pm1}, & x\in C,\\
\varepsilon, & x\in X\setminus C.
\end{cases}
\]

Suppose that \(w\in\widetilde X^*\) is geodesic and represents an
element \(g\in A_C\). Since \(\rho_C\) restricts to the identity on
\(A_C\), the word \(\widehat{\rho}_C(w)\) also represents \(g\). If
\(w\) contained a letter from \(\widetilde X\setminus\widetilde C\),
then
\(
\bigl|\widehat{\rho}_C(w)\bigr|<|w|,
\)
contradicting the geodesicity of \(w\). Hence every geodesic word
representing an element of \(A_C\) lies in \(\widetilde C^*\).

Conversely, suppose that \(w\in\widetilde C^*\) is geodesic in \(A_C\).
If \(w\) were not geodesic in \(A_\Gamma\), there would exist a word
\(v\in\widetilde X^*\) representing the same element as \(w\) such that
\(
|v|<|w|.
\)
Applying the retraction, the word \(\widehat{\rho}_C(v)\) would represent
the same element of \(A_C\) as \(w\), and
\[
\bigl|\widehat{\rho}_C(v)\bigr|
\leq |v|
<|w|,
\]
contradicting the geodesicity of \(w\) in \(A_C\). Therefore
\[
\Geo_X\bigl(Z(A_\Gamma)\bigr)
=
\Geo_C(A_C).
\]
Since \(A_C\) is finitely generated abelian,
\cite[Proposition~6.2]{[HHR07]} implies that
\(\Geo_C(A_C)\) is piecewise excluding, and hence
piecewise testable. By Proposition \ref{diffinite},
\[
\UConjGeo_X(A_\Gamma)
=
\Geo_X(Z(A_\Gamma))\cup F,
\]
where \(F\) is finite. Since finite languages are piecewise testable and
the class of piecewise-testable languages is closed under finite unions,
\(\UConjGeo_X(A_\Gamma)\) is piecewise testable.
\end{proof}

\subsection{Virtually abelian groups}

In \cite{[HHR07]}, it is shown that, for a virtually abelian group, there is a generating set for which the language of geodesics is piecewise excluding. In \cite{[CHHR16]}, it is shown that, in this case, there is a generating set for which $\ConjGeo(G)$ is piecewise testable. We follow that work and prove that there is a generating set for which $\UConjGeo(G)$ is piecewise excluding.

\begin{lemma}\cite[Lemma 4.3.2]{[ECHLPT92]}\label{dickson}
	Equip \(\mathbb N^r\) with the componentwise partial order
	\[
	(m_1,\ldots,m_r)\preceq(n_1,\ldots,n_r)
	\quad\Longleftrightarrow\quad
	m_i\leq n_i
	\text{ for every }i.
	\]
	Every subset of \(\mathbb N^r\) has only finitely many minimal
	elements.
\end{lemma}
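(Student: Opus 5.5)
The statement is Dickson's lemma: every subset of $\mathbb N^r$ has only finitely many minimal elements. Since the minimal elements of a set $S$ form an antichain, it is enough to show that $\mathbb N^r$ contains no infinite antichain. We can say more: every infinite sequence $(v_k)_{k\ge1}$ in $\mathbb N^r$ has an infinite subsequence that is non-decreasing with respect to $\preceq$. Once this holds, an infinite set of minimal elements of $S$ could be enumerated as a sequence of distinct elements. That sequence would contain indices $k<l$ with $v_k\preceq v_l$ and $v_k\neq v_l$. Then $v_l$ is not minimal, a contradiction.

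We prove the subsequence claim by induction on $r$.

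\textbf{Base case $r=1$.} Any sequence in $\mathbb N$ has a non-decreasing infinite subsequence. If the sequence is bounded, some value occurs infinitely often, and the constant subsequence works. If it is unbounded, we greedily choose successive terms larger than the previous chosen one.

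\textbf{Inductive step.} Given $(v_k)$ in $\mathbb N^{r}$, first apply the case $r-1$ to the projections onto the first $r-1$ coordinates. This gives an infinite subsequence that is non-decreasing in those coordinates. Then apply the case $r=1$ to the last coordinate of that subsequence. The result is a further infinite subsequence that is non-decreasing in all $r$ coordinates, that is, a $\preceq$-chain.

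The only mildly delicate point is the final step: we must pass from "no infinite antichain" to "finitely many minimal elements". This requires noting that distinct minimal elements of $S$ are pairwise $\preceq$-incomparable. Indeed, if $m\preceq m'$ with both minimal, then minimality of $m'$ forces $m=m'$. With that observation in place, the argument is routine.

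Alternatively, since the paper cites \cite[Lemma 4.3.2]{[ECHLPT92]}, we could simply invoke it directly. The sketch above is a self-contained justification.
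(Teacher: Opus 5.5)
Your proof is correct, and it takes a different route from the paper only because the paper gives no proof at all. The paper quotes Dickson's lemma from Epstein et al.\ and uses it as a black box, solely to obtain the finite set $\mathcal M$ of minimal elements of $\mathcal R\setminus\{\mathbf 0\}$ in the virtually abelian construction. So your sketch replaces a citation rather than varying an argument in the text.

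You argue through a stronger statement: $\preceq$ is a well-quasi-order on $\mathbb N^r$, i.e.\ every sequence has an infinite $\preceq$-non-decreasing subsequence. You then use the fact that distinct minimal elements are pairwise incomparable. This proves more than the paper needs, and every step checks out.

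In the unbounded base case you implicitly use that only finitely many terms precede any index, so a later, larger term always exists. Alternatively, choosing $k_{j+1}>k_j$ with $v_{k_{j+1}}=\min\{v_k:k>k_j\}$ handles the bounded and unbounded cases at once.

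A more direct route, avoiding sequences, is induction on $r$ by slicing:
\begin{itemize}
\item If $S$ has a minimal element $s=(s_1,\ldots,s_r)$, then every other minimal element $m$ satisfies $s\not\preceq m$, so $m_i<s_i$ for some $i$.
\item Hence all minimal elements of $S$ lie in the finitely many slices $\{x\in\mathbb N^r: x_i=c\}$ with $c<s_i$.
\item Each slice is order-isomorphic to $\mathbb N^{r-1}$, and a minimal element of $S$ lying in a slice is minimal in $S\cap\{x_i=c\}$. The induction hypothesis therefore bounds their number.
\end{itemize}
Your version gives the stronger well-quasi-order property, which is the form usually reused elsewhere. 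The slicing version proves exactly the finiteness statement needed here.
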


For a finite inverse-closed generating set \(S\) of a group \(G\) and
a subgroup \(K\leq G\), we write
\[
\Geo_S(K)
=
\{w\in\Geo_S(G):w\pi\in K\}.
\]

\begin{theorem}
	Let \(G\) be a finitely generated virtually abelian group. Then there exists
	a finite inverse-closed generating set \(S\) of \(G\) such that
	\(
	\Geo_S(Z(G))
	\)
	is piecewise excluding. Consequently,
	\(\UConjGeo_S(G)\) is piecewise excluding, and hence regular.
\end{theorem}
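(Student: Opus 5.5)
We follow the strategy of \cite[Proposition~6.2]{[HHR07]} and \cite{[CHHR16]}. Since $G$ is finitely generated virtually abelian, it has a normal free abelian subgroup $N\cong\mathbb Z^r$ of finite index; $Z(G)$ is a finitely generated subgroup. Fix a finite set $T$ of coset representatives of $N$ in $G$ and a basis $\{n_1,\dots,n_r\}$ of $N$. As in \cite{[HHR07]}, take
\[
S=\bigl(T\cup T^{-1}\cup\{n_i^{\pm1}\}\cup\{t n_i^{\pm1}: t\in T\}\cup\{\text{all products of at most two of these of bounded length}\}\bigr),
\]
enlarged so that every element of $G$ of length at most $2$ in the old generators lies in $S$. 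Two consequences are then available: geodesic words over $S$ are ``essentially'' words in $N$-generators with a bounded number of letters outside a finite subset, and the class of a word modulo $N$ and its image in $N$ depend only on the multiset of letters up to a bounded defect. Precisely, as in \cite{[HHR07]}, we will show that for a geodesic $w$, reordering its letters gives another geodesic representing an element in the same coset and with the same $N$-component up to the finite group $G/N$ acting on $N$.

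The key structure is this: to each word $w\in S^*$ attach its letter-count vector $c(w)\in\mathbb N^{|S|}$. We aim to prove that membership of a geodesic $w$ in $\Geo_S(Z(G))$ depends only on $c(w)$ together with finitely many ``shape'' data, and that the set $B$ of count vectors of words not in $\Geo_S(Z(G))$ is upward closed in $\preceq$. The latter holds for geodesicity by \cite{[HHR07]}: if $w$ is not geodesic, then some subword obtained by inserting letters remains non-geodesic, since a shortening persists. For centrality the subtlety is that adding letters can make a non-central element central. We therefore handle $Z(G)$ by choosing $S$ so that $Z(G)\cap N$ is spanned by part of the basis, splitting $N=N_1\times N_2$ with $N_1$ of finite index in $Z(G)\cap N$ up to a complement. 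We also include in $S$ the finitely many central elements of bounded length so that geodesics for central elements use only letters from a subset $S_Z\subseteq S$ of central generators, mirroring the retraction argument used for RAAGs. The goal is to prove: a geodesic represents a central element iff it uses only letters of $S_Z$, together with finitely many exceptions. Then $\Geo_S(Z(G))=\Geo_S(G)\cap S_Z^*$, minus finitely many words. Excluding all letters of $S\setminus S_Z$ is a piecewise excluding condition, and $\Geo_S(G)$ is piecewise excluding by \cite{[HHR07]}, so the intersection is piecewise excluding. Lemma~\ref{dickson} then ensures the forbidden set of minimal count vectors is finite, and each such vector yields finitely many forbidden piecewise subwords.

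The main obstacle is the claim that geodesics of central elements avoid non-central letters. In general this fails for an arbitrary $S$: a central element may have a geodesic such as $gh$ with $g,h$ noncentral. I would try to defeat this by choosing $S$ so that every central element of $G$ has a geodesic decomposition, and every product of non-central generators landing in $Z(G)$ is not shorter than its central rewriting. To enforce this I would add to $S$ all central elements of length at most $K$ in the original generators, with $K$ large relative to $[G:N]$. Then, using that the abelianization-type projection $G\to G/N$ is finite, any geodesic for a central element containing a noncentral letter has a bounded-length factor whose product is central and can be replaced by a single letter, contradicting geodesicity unless the word is short. The finitely many short exceptions are handled through Proposition~\ref{diffinite}. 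Removing finitely many words keeps a language piecewise excluding only after adjusting, so at the end I would argue instead that the exceptional words are themselves excluded by finitely many additional forbidden subwords. If this fails, I would fall back to proving piecewise testability via a Boolean combination.
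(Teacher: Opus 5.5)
Your proposal has a genuine gap at the point you yourself flag as the main obstacle, and the mechanism you suggest for closing it does not work.

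Finiteness of $G/N$ only gives, by pigeonhole on prefixes, a short factor whose product lies in $N$, not in $Z(G)$. But $Z(G)$ can have infinite index in $N$: in the example $\mathbb Z^2\rtimes\mathbb Z/2\mathbb Z$ at the end of the paper, $Z(G)=\langle ab\rangle$. So no bound on a ``central piece'' in terms of $[G:N]$ exists. Moreover, the letters whose product is central need not be contiguous, so a \emph{factor} is the wrong notion.

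The missing ingredients, which the paper supplies, are the following.
\begin{enumerate}
\item Take $N=A\,Z(G)$, with $A$ a finite-index normal abelian subgroup. Then $N$ is abelian and $Z(G)\le N$. Your free abelian $N\cong\mathbb Z^r$ need not contain $Z(G)$; for instance, it misses central torsion.
\item Use the \cite{[HHR07]} generating set $X_0\cup Y$. Its defining properties survive adjoining central elements, so every geodesic representing an element of $N$ lies in $X^*$ with $X\subseteq N$.
\item Since $N$ is abelian, the value of such a word depends only on the counts of the non-central letters $b_1,\dots,b_m$. By Lemma~\ref{dickson}, the nonzero count vectors with central value have finitely many minimal elements $\mathbf n$. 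Adjoining each $h_{\mathbf n}=b_1^{n_1}\cdots b_m^{n_m}$ lets you replace a sub-multiset of at least two letters, gathered together by commutativity, by a single letter.
\end{enumerate}
Your ``$K$'' must therefore be the Dickson bound $\max\sum n_i$, not something depending on $[G:N]$.

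The key claim must also hold with \emph{no} exceptions. A piecewise excluding language is closed under scattered subwords. So if even one geodesic of a central element contained a noncentral letter $x$, the one-letter word $x$ would have to lie in $\Geo_S(Z(G))$, which it does not. Proposition~\ref{diffinite} cannot absorb such words, since it concerns the noncentral $\UConj$-class. Falling back to piecewise testability would prove a strictly weaker statement than the theorem.

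You also do not need, and have not justified for your enlarged $S$, that $\Geo_S(G)$ is piecewise excluding. Your reason ``a shortening persists'' under inserting letters is false in general: in a free group $aa^{-1}$ is not geodesic but $aba^{-1}$ is. Once geodesics of central elements lie in $C^*$ with $C=S\cap Z(G)$, one gets $\Geo_S(Z(G))=\Geo_C(Z(G))$. Then \cite[Proposition~6.2]{[HHR07]} applies directly to the abelian group $Z(G)$.

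Finally, $\UConjGeo_S(G)=\Geo_S(Z(G))\cup(S\setminus C)$ is obtained by \emph{adding} the noncentral single letters, not by removing words. The paper preserves piecewise exclusion by also forbidding the two-letter piecewise subwords $xy$ with $x\notin C$ or $y\notin C$.
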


\begin{proof}  
	Put
	\(
	H=Z(G).
	\)
	Since \(G\) is virtually abelian, there exists a finite-index normal abelian
	subgroup \(A\trianglelefteq G\). Set
	\(
	N=AH.
	\)
	Since \(H\) is central, \(N\) is again a finite-index normal abelian subgroup
	of \(G\), and \(H\leq N\).
	
	We use the generating set constructed in the proof of
	\cite[Proposition 6.3]{[HHR07]}. Namely, starting with a finite
	generating set of \(N\) containing a generating set for \(H\), one obtains
	a finite inverse-closed generating set
	\(
	S_0=X_0\cup Y
	\)
	of \(G\), where

	\begin{enumerate}
		\item \(X_0\subseteq N\) and \(Y\subseteq G\setminus N\);
		\item both \(X_0\) and \(Y\) are inverse-closed;
		\item \(X_0\) is closed under conjugation by elements of \(Y\);
		\item \(Y\) contains a representative of every nontrivial coset
		of \(N\) in \(G\);
		\item whenever
		\(
		w=_G xy,
		\)
		where \(w\) is a word of length at most \(3\) over \(Y\),
		\(y\in Y\cup\{1\}\), and \(x\in N\), then \(x\in X_0\).
	\end{enumerate} 
	 The
	argument given there shows that every geodesic representing an element of
	\(N\) contains no letters from \(Y\).
	
	We now modify \(X_0\) in order to obtain the analogous property for \(H\).
	Write
	\[
	X_0\setminus H=\{b_1,\ldots,b_m\}.
	\]
	If \(m=0\), no modification is needed. Otherwise, consider
	\[
	\mathcal R=
	\left\{
	(n_1,\ldots,n_m)\in\mathbb N^m
	\;\middle|\;
	b_1^{n_1}\cdots b_m^{n_m}\in H
	\right\}.
	\]
	By Lemma~\ref{dickson},
	\(\mathcal R\setminus\{\mathbf 0\}\) has finitely many minimal elements;
	denote their set by \(\mathcal M\). For each
	\(\mathbf n=(n_1,\ldots,n_m)\in\mathcal M\), put
	\(
	h_{\mathbf n}=b_1^{n_1}\cdots b_m^{n_m}\in H,
	\)
	and define
	\[
	X=
	X_0\cup
	\left\{
	h_{\mathbf n}^{\pm1}
	\;\middle|\;
	\mathbf n\in\mathcal M,\ h_{\mathbf n}\neq1
	\right\},
	\qquad
	S=X\cup Y.
	\]
	Since the new generators are central, the properties   are preserved.
	In particular,
	\begin{equation}\label{eqvirtab1}
	w\in\Geo_S(G),\quad w\pi\in N
	\quad\Longrightarrow\quad
	w\in X^*.
	\end{equation}
	
	Let
	\(
	C=S\cap H=X\cap H.
	\)
	We claim that
	\begin{equation}\label{eqvirtab2}
	\Geo_S(H)=\Geo_C(H).
\end{equation}
	Let \(w\in\Geo_S(H)\). By (\ref{eqvirtab1}), we have \(w\in X^*\).
	Suppose that \(w\) contains a letter from \(X\setminus H\). Since the
	generators added to \(X_0\) belong to \(H\),
	\[
	X\setminus H=X_0\setminus H=\{b_1,\ldots,b_m\}.
	\]
	Let \(r_i\) be the number of occurrences of \(b_i\) in \(w\), and put
	\(
	\mathbf r=(r_1,\ldots,r_m).
	\)
	
	As \(N\) is abelian and \(w\pi\in H\), we have
	\[
	b_1^{r_1}\cdots b_m^{r_m}\in H.
	\]
	Thus
	\(
	\mathbf r\in\mathcal R\setminus\{\mathbf0\}.
	\)
		Choose \(\mathbf n=(n_1,\ldots,n_m)\in\mathcal M\) with
	\(\mathbf n\leq\mathbf r\) componentwise. Notice that
	\(
	n_1+\cdots+n_m\geq2,
	\)
	since otherwise \(b_i\in H\) for some \(i\).
	
	Since \(N\) is abelian,  we can replace \(n_i\) occurrences of all the \(b_i\) in
	\(w\)  by
	\(
	h_{\mathbf n}=b_1^{n_1}\cdots b_m^{n_m}\in S.
	\)
	
	Since $	n_1+\cdots+n_m\geq2$, at least two letters are replaced by one.
	In either case, we obtain a shorter representative of \(w\pi\), a
	contradiction. Hence \(w\in C^*\).
	
	Conversely, suppose that \(w\in\Geo_C(H)\) is not geodesic
	with respect to \(S\). Let \(v\) be a shorter \(S\)-geodesic representing \(w\pi\).
	By the previous argument \(v\in C^*\), contradicting
	the geodesicity of \(w\) in \(H\). This proves (\ref{eqvirtab2}).
	
	Since \(H\) is finitely generated abelian, it follows from
	\cite[Proposition~6.2]{[HHR07]} that
	\(
	\Geo_C(H)
	\)
	is piecewise excluding. Hence, by (\ref{eqvirtab2}),
	\(\Geo_S(Z(G))\) is piecewise excluding.

It remains to consider \(\UConjGeo_S(G)\). By
Proposition~5.1,
\[
\UConjGeo_S(G)
=
\Geo_S(H)\cup F,
\]
where \(F\) is the set of shortest representatives of the noncentral
\(\UConj\)-class. If \(G\) is abelian, then \(F=\varnothing\)
and the result follows immediately. Otherwise, the shortest noncentral
elements have length one, and therefore
\(
F=S\setminus C.
\)

Let \(W\subseteq C^*\) be a finite set such that
\(\Geo_C(H)\) consists precisely of the words over \(C\)
which contain no element of \(W\) as a piecewise subword. Put
\[
W'
=
W\cup
\{xy\in S^2:x\notin C\text{ or }y\notin C\}.
\]
We claim that \(\UConjGeo_S(G)\) is precisely the language
of words over \(S\) which avoid the elements of \(W'\) as piecewise
subwords.

Indeed, a word over \(C\) avoids \(W'\) if and only if it avoids \(W\),
and hence belongs to \(\Geo_C(H)\). On the other hand, a
word containing a letter from \(S\setminus C\) avoids \(W'\) if and only
if it consists of that single letter. Thus the language defined by
excluding \(W'\) is
\[
\Geo_C(H)\cup(S\setminus C)
=
\UConjGeo_S(G).
\]
Therefore \(\UConjGeo_S(G)\) is piecewise excluding, and
hence regular.
\end{proof}

From our results in the free inverse monoid and in hyperbolic and right-angled Artin groups, one might be led into thinking that the complexity of $\UConjGeo$ is bounded above by the complexity of $\ConjGeo$. We show an example (from \cite{[CHHR16]}) where, for a given generating set, $\UConjGeo$ is not regular, but $\ConjGeo$ is.

\begin{example}
	Consider the following example from \cite[Proposition 5.1]{[CHHR16]}. Let
	\[
	G=
	\left\langle
	a,b,t
	\;\middle|\;
	[a,b]=1,\quad t^2=1,\quad tat=b
	\right\rangle
	\cong
	\mathbb Z^2\rtimes\mathbb Z/2\mathbb Z,
	\]
 with generating set
	\(
	Z=\{a^{\pm1},b^{\pm1},t\}.
	\)
	By \cite[Proposition~5.1]{[CHHR16]},
	\(\ConjGeo_Z(G)\) is regular.
	
	Every element of \(G\) can be written as
	\[
	a^i b^j t^\varepsilon,
	\qquad
	i,j\in\mathbb Z,\quad \varepsilon\in\{0,1\}.
	\]
	Since
	\(
	ta^ib^jt=a^jb^i,
	\)
	an element \(a^ib^j\) is central if and only if \(i=j\), while an element
	\(a^ib^jt\) does not commute with \(a\). Hence
	\(
	Z(G)=\langle ab\rangle.
	\)
	
	Now consider the regular language
	\(
	R=a^+b^+.
	\)
	Every word \(a^pb^q\), with \(p,q\geq1\), is geodesic. Indeed, we can define a homomorphism
	\[
	\chi\colon G\longrightarrow\mathbb Z,
	\qquad
	\chi(a)=\chi(b)=1,\quad \chi(t)=0.
	\]
	Then, for every word \(w\) over \(Z\), we have that 
	\(
	|w|\geq|\chi(w\pi)|
	\)
	 and 
	\(\chi(a^pb^q)=p+q\).
	
By Proposition~5.1, the words of \(\UConjGeo_Z(G)\) of length greater than one represent central elements. Therefore \[ \UConjGeo_Z(G)\cap R = \{a^nb^n:n\geq1\}. \] This language is not regular. Since \(R\) is regular, it follows that \(\UConjGeo_Z(G)\) is not regular. Thus, even for virtually abelian groups, regularity of \(\ConjGeo\) does not imply regularity of \(\UConjGeo\) with respect to the same generating set.
\end{example}
`

\section*{Acknowledgments}

The first author was supported by national funds through the Fundação para a Ciência e Tecnologia, FCT, under the project
UID/04674/2025. The second author is supported by national funds through the FCT – Fundação para a Ciência e a Tecnologia, I.P., under the scope of the individual research grant 2025.03264.BD and the projects UID/297/2025 and UID/PRR/297/2025 (Center for Mathematics and Applications - NOVA Math).
 
\bibliographystyle{plain}
\bibliography{Bibliografia}
 \end{document}